\newtheorem{problem}{Problem}
\newtheorem{theorem}{Theorem}
\newtheorem{lemma}{Lemma}
\newtheorem{corollary}{Corollary}
\newtheorem{conjecture}{Conjecture}
\theoremstyle{definition}
\newtheorem{remark}{Remark}
\newcommand{\RR}{\mathbb{R}}
\newcommand{\CC}{\mathbb{C}}
\newcommand{\DD}{\mathbb{D}}
\begin{document}

\title{Estimating the Koebe radius for polynomials}

\author{Dmitriy Dmitrishin}
\address{Odessa National Polytechnic University, 1 Shevchenko Ave.,
Odessa 65044, Ukraine}
\email{dmitrishin@opu.ua}

\author{Andrey Smorodin}
\address{Odessa National Polytechnic University, 1 Shevchenko Ave.,
Odessa 65044, Ukraine}

\author{Alex Stokolos}
\address{Georgia Southern University, Statesboro GA, 30460}
\email{astokolos@geogriasouthern.edu}

\begin{abstract}

For a pair of conjugate trigonometrical polynomials \(C (t) = \sum_ { j = 1 } ^N { { a_j}\cos jt } \), \(S(t) = \sum_ { j = 1 } ^N { { a_j}\sin jt } \) with real coefficients and  normalization \({a_1} = 1 \) we solve the  extremal problem
\[
 \sup_ {a_2,...,a_N} 
\left ({ \min_t 
\left\{ {\Re \left ({ F\left ({ { e^ {it} } } \right) } \right): \Im 
\left ({ F\left ({ { e^ {it} } } \right) } \right) = 0 } \right\} } \right) = -\frac14 \sec ^2\frac\pi{N + 2}.  
\]
We show that the solution is unique and is given by 
\[
a_j^ {(0)} = \frac {1} { { {  U'_N}\left ({\cos \frac{\pi } { { N + 2 } } } \right) } } { U' _ { N - j + 1 } }\left ({\cos \frac{\pi } { { N + 2 } } } \right) { U_ { j - 1 } }\left ({\cos \frac{\pi }
 { { N + 2 } } } \right),  
\]
where the \({ U_j}\left (x \right) \) are the Chebyshev polynomials of the second kind, $j = 1, \ldots, N.$ As a consequence, we obtain some theorems on covering of intervals by polynomial images of the unit disc. We formulate several conjectures on  a number of extremal problems on  classes of polynomials. 
\end {abstract}

\keywords{Geometric complex analysis, Koebe radius, Suffridge polynomials, positive trigonometric polynomials}

\maketitle

\section {Introduction}

One of the fundamental problems in nonlinear dynamics is suppression of the chaotic regime of a system \cite{1.,2.}.  To solve this problem, various schemes connected with special representation of the delayed feedback (DFC) have been suggested \cite{3.,4.,5.}. The classical linear DFC  scheme \cite{6.}  is only applicable for a limited range of parameters involved in the nonlinear system \cite{7.,8.}. On the other hand, the nonlinear  DFC scheme with several delays allows solving the problem of controling chaos in a fairly general case  \cite{9.,10.}. At the same time, it is necessary to keep the length of the prehistory applied at a minimal possible level. The problem reduces to the following optimization problem:  find
\begin{equation}\label{1}
G_N =  \sup_{\sum_ { j = 1 } ^N {a_j } = 1 } \min_t \, \{  \Re  ( F ( e^ {it} )): \Im ( F ( e^ {it})) = 0  \} , 
\end{equation}
where the supremum is taken over all  real $a_1,\ldots,a_n$ summing to~1 and where \(F\left( z \right) = \sum_{j = 1}^N {{a_j}{z^j}}\),
and also find a polynomial $F$ that achieves the supremum. The coefficients \(a_j^0 \)  of this polynomial determine the parameters of the required DFC. 

This problem is solved in \cite{10.}:
\begin{align*}
G_N &= - {\tan ^2 }\frac{\pi } { { 2 (N + 1) } },\\ 
a_j^0 &= \frac {{2N} } { { N + 1 } }\tan \frac{\pi } { { 2 (N + 1) } }\left ({ 1 - \frac { { j - 1 } }{N} } \right) \sin \frac {{j\pi}} { { N + 1 } }, \quad j = 1, \ldots, N,
\end{align*}
and the coefficients \(a_j^0 \) can be expressed through Chebyshev polynomials of the second kind:
\begin{align*}
a_ {1} ^0 &= \frac {{2N} } { { N + 1 } }\left ({ 1 - \cos \frac{\pi } { { N + 1 } } } \right),\\
a_j^0 &= a_ {1} ^0\left ({ 1 - \frac { { j - 1 } }{N} } \right) { U_ { j - 1 } }\left ({\cos \frac{\pi } { { N + 1 } } } \right), \quad j = 2, \ldots, N, 
\end{align*}
where 
\[
U_j\left (x \right) = { U_j}\left ({\cos t } \right) = \frac { {\sin (j + 1) t } } { {\sin t } } = {2^j}{x^j} +  \ldots.
\]
The polynomial
\begin{equation}\label{2}
S\left (z \right) = a_{1}^0\sum_{j = 1}^N{\left ({1 - \frac{{j - 1}}{N}}\right){U_{j - 1}}\left ({\cos \frac{\pi}{{N + 1}}}\right){z^j}}
\end{equation}
 was first considered  in \cite{11.}  and is called the \emph{Suffridge polynomial.}  The polynomial is univalent. Various properties of these polynomials were  obtained in \cite{12.,13.}. 
 
The Suffridge polynomial has \(S\left (0 \right) = 0 \), \(S\left (1 \right) = 1 \), so it is natural to restrict problem (1) to polynomials with this normalization. Such an optimization problem also appears in linear feedback control. 

So, let us consider the following problem:

\begin{problem}\label{p1}
 Find
\[
J_N = \sup_{ a_2,...,a_N\in\RR}\left ({\min_t\left\{{\Re\left ({F\left ({{e^{it}}}\right)}\right):\Im\left ({F\left ({{e^{it}}}\right)}\right) = 0}\right\}}\right) 
\]
where \(F\left( z \right) = z+\sum_{j = 2}^N{{a_j}{z^j}}\), and find the extremal polynomial $F$.
\end{problem}

It is clear that  \(\Im \left({F\left({{e^{it}}}\right)}\right) = 0\) when \(t = 0\) or \(t = \pi \).

\section{Auxiliary results}

First, we prove a property of the image of the unit disc  under a polynomial mapping.  

\begin{lemma}
Let \(F(z) = z +{a_2}{z^2}+\ldots +{a_N}{z^N}\)  \((a_j \in \mathbb C,\, j = 2, \ldots, N) \), and let \(\mathbb D = \left\{{z:\left| z \right| < 1}\right\}\) be the unit disc. Then  \(F(\mathbb D) \) contains  a disc of radius \(\frac{1}{N}\) centered at zero. 
\end{lemma}

\begin{proof} Let $\gamma\in\CC$ and assume that \(F(z) \) takes the value \(\gamma \) for no \(z \in\mathbb D \). Since \(\left | F(z) \right | \le 1 + \sum_{j = 2}^N{\left |{{a_j}}\right |}\) for all \(z\in\DD\),  such values exist. Then the polynomial \(-\gamma + F(z) \) does not vanish for \(z \in \mathbb D \), and the reciprocal polynomial \({z^N}\left ({- \gamma + F\left ({{z^{- 1}}}\right)}\right) = - \gamma{z^N}+{z^{N - 1}}+{a_2}{z^{N - 2}}+{a_N}\) is Schur stable. The Vieta theorem now implies  \(\left |{\frac 1{\gamma}}\right| \le N  \), hence \(\left | \gamma \right | \ge \frac{1}{N}\). 
\end{proof}

\begin{remark}
The Vieta theorem implies that \(\left|\frac{a_j}{\gamma}\right| \leq \binom{N}{j}\), \(j = 1, \ldots, N\), therefore \(\sum_{j = 1}^N{\left |{\frac{{{a_j}}}{\gamma}}\right |}\le{2^N}- 1 \) and \(\sum_{j = 1}^N{\left |{{a_j}}\right |}\le \left ({{2^N}- 1}\right) \left | \gamma \right | \).
\end{remark}

The following lemma about factorization of  conjugate trigonometrical polynomials can be considered as a real analogue of the B\'ezout theorem. 

\begin{lemma} Let 
\begin{equation}\label{4}
C (t) = \sum_{j = 1}^N{{a_j}\cos jt}, \quad S(t) = \sum_{j = 1}^N{{a_j}\sin jt} 
\end{equation}
be a couple of conjugate trigonometrical polynomials with real coefficients. Suppose that 
\begin{equation}\label{5}
S ({t_1}) = \ldots = S ({t_m}) = 0, \quad C ({t_1}) = \ldots = C ({t_m}) = \gamma, 
\end{equation}
for some \({t_1}, \ldots,{t_m} \in \left ({0, \pi}\right) \) with \(2m \le n \). 
Then the trigonometrical polynomials \eqref{4}  have a representation of the form
\begin{align*}
C (t) & = \gamma + \prod_{k = 1}^m (\cos t - \cos{t_k})\cdot \sum_{j = m}^{N - m}{\alpha_j}\cos jt, \\
S(t) & = \prod_{k = 1}^m{(\cos t - \cos{t_k})}\cdot \sum_{j = m}^{N - m}{{\alpha_j}}\sin jt, 
\end{align*}
where the coefficients \({\alpha_m}, \ldots,{\alpha_{N - m}}\) are uniquely determined by \(\gamma,{a_1}, \ldots,{a_N}\), and \({\alpha_m}= -{2^m}\gamma \),  
\({a_1}= \frac{{{\alpha_{m + 1}}}}{{{2^m}}}- \frac{{{\alpha_m}}}{{{2^{m - 1}}}}\sum_{j = 1}^m{\cos{t_j}}\).
\end{lemma}

\begin{proof}
Consider the algebraic polynomial
\[
F\left( z \right) = - \gamma + \sum_{j = 1}^n{{a_j}{z^j}}.
\]
Then 
\(C(t) = \gamma + \Re\left(F\left(e^{it}\right)\right)\) and \(S(t) = \Im \left(F\left(e^{it}\right)\right)\). Because of \eqref{5}, \(F\left({{e^{i{t_j}}}}\right) = F\left(e^{- i{t_j}}\right) = 0\) for \(j = 1, \ldots, m\). 
By the Fundamental Theorem of Algebra there are numbers \({\beta_1},\ldots,{\beta_{n - 2m}}\) such that
\[
F\left(z\right) = \left(\prod_{k=1}^m\left(z -e^{i{t_k}}\right)\left(z -e^{-i{t_k}}\right)\right) \left({- \gamma + \sum_{j = 1}^{n - 2m}{\beta_j}{z^j}}\right).
\]
Let us write
\begin{align*}
\prod_{j = 1}^m\left({z -{e^{i{t_j}}}}\right)\left(z - e^{- i t_j}\right)&= \prod_{j = 1}^m \left({z^2- 2z\cos{t_j}+ 1}\right) \\
&= {2^m}{z^m}\prod_{j = 1}^m{\left({\frac{1}{2}\left(z + \frac{1}{z}\right) - \cos{t_j}}\right)}.
\end{align*}
Then
\[
F\left( z \right) = \prod_{k = 1}^m \left( \frac{1}{2}\left({z + \frac{1}{z}}\right) - \cos{t_k} \right) \left( -{2^m}\gamma{z^m}+ {2^m}\sum_{j = 1}^{n - 2m} \beta_j z^{m+j} \right).
\]
Therefore
\begin{align*}
\Re\left(F\left(e^{it}\right) \right)&= \prod_{k=1}^m\left(\cos t - \cos{t_k}\right)
\left( -{2^m}\gamma \cos mt + {2^m}\sum_{j=m+1}^{n-m}{\beta_{j-m}}\cos jt \right),\\
\Im \left({F\left({{e^{it}}}\right)}\right)&= \prod_{k = 1}^m \left(\cos t - \cos{t_k}\right) \left({-{2^m}\gamma \sin mt + 2^m\sum_{j = m + 1}^{n - m}{{\beta_{j-m}}\sin jt}}\right),
\end{align*}
which implies that
\begin{align*}
C(t) &= \gamma + \prod_{k = 1}^m{(\cos t - \cos{t_k})}\cdot \sum_{j = m}^{N - m}{{\alpha_j}}\cos jt,\\
S(t) &= \prod_{k = 1}^m{(\cos t - \cos{t_k})} \cdot \sum_{j = m}^{N - m}{{\alpha_j}}\sin jt
\end{align*}
with \(\alpha_m = -{2^m}\gamma\) and \( \alpha_{m + j}={2^m}{\beta_j}\) for \( j = 1, \ldots, n - 2m\).

Further, 
\begin{align*}
F'\left( z \right) &= \frac{d}{{dz}}\left({\prod_{j = 1}^m{\left({{z^2}- 2z\cos{t_j}+ 1}\right)}\left({- \gamma  + \sum_{j = 1}^{n - 2m}{{\beta_j}{z^j}}}\right)}\right) \\
&= 2\left({- \gamma  + \sum_{j = 1}^{n - 2m}{{\beta_j}{z^j}}}\right)\sum_{k = 1}^m \left({z - \cos{t_k}}\right) \prod_{\substack{j = 1\\j \ne k}}^m \left(z^2- 2z\cos t_j + 1\right) \\
&\quad + \prod_{j = 1}^m{\left({{z^2}- 2z\cos{t_j}+ 1}\right)}\sum_{j = 1}^{n - 2m}{j{\beta_j}{z^{j - 1}}}.
\end{align*}
Hence \(F'\left( 0 \right) = - 2\gamma \sum_{k = 1}^m{\cos t_k + {\beta_1}= - \frac{{{\alpha_m}}}{{{2^m - 1}}}} \sum_{k = 1}^m{\cos t_k}+ \frac{{{\alpha_{m + 1}}}}{{{2^m}}}\), which proves the lemma.
\end{proof}

We will need some properties of  Chebyshev polynomials of the second kind.

\begin{lemma} The following identities are valid:
\begin{align*}
 {U_{j - 1}}(x){U_{j + k}}(x) &={U_j}(x){U_{j + k - 1}}(x) -{U_{k - 1}}(x),\\
{U_{j + k}}(x) &={U_j}(x){U_k}(x) -{U_{j - 1}}(x){U_{k - 1}}(x),\\
\frac{d}{{dx}}{U_k}(x) &= \frac{1}{2(1 -x^2)}((k + 2)U_{k - 1}(x) - k{U_{k + 1}}(x))\\ 
&= \frac{1}{1 -x^2}((k + 1) U_{k - 1}(x) - kx U_k(x)).
\end{align*}
\end{lemma}

\begin{proof}This follows from the definition of Chebyshev polynomials by straightforward computations.
\end{proof}

Now, let us define two  \(N\times N\) matrices:
\[
A = \begin{pmatrix}
0 & 1/2 & 0 & 0 & \ldots\\
1/2 & 0 & 1/2 & 0 & \ldots\\
0 & 1/2 & 0 & 1/2 &\ldots\\
0 & 0 & 1/2 & 0 & \ldots\\
\ldots &\ldots &\ldots &\ldots &\ldots
\end{pmatrix}, 
\ B = \left({\begin{array}{*{20}{c}}
0&0&{{1 \mathord{\left/{\vphantom{1 2}}\right. \kern-\nulldelimiterspace}2}}&0& \ldots \\
0&0&0&{{1 \mathord{\left/{\vphantom{1 2}}\right. \kern-\nulldelimiterspace}2}}& \ldots \\
{{1 \mathord{\left/{\vphantom{1 2}}\right. \kern-\nulldelimiterspace}2}}&0&0&0& \ldots \\
0&{{1 \mathord{\left/{\vphantom{1 2}}\right. \kern-\nulldelimiterspace}2}}&0&0& \ldots \\
\ldots & \ldots & \ldots & \ldots & \ldots
\end{array}}\right).
\]
Let \(I\) denote the unit matrix.

\begin{lemma}
\(\det \left({4{x^2}(I - A) - (I - B)}\right) = \frac{1}{{{2^{N + 2}}x}}{U_{N + 1}}(x){U'_{N + 1}}(x)\).
\end{lemma}

\begin{proof} Set
\[
{\Phi_N}= 4{x^2}(I - A) - (I - B) = \left({\begin{array}{*{20}{c}}
{{U_2}}&{- x{U_1}}&{{1 \mathord{\left/{\vphantom{1 2}}\right. \kern-\nulldelimiterspace}2}}&0&0& \ldots \\
{- x{U_1}}&{{U_2}}&{- x{U_1}}&{{1 \mathord{\left/{\vphantom{1 2}}\right. \kern-\nulldelimiterspace}2}}&0& \ldots \\
{{1 \mathord{\left/{\vphantom{1 2}}\right. \kern-\nulldelimiterspace}2}}&{- x{U_1}}&{{U_2}}&{- x{U_1}}&{{1 \mathord{\left/{\vphantom{1 2}}\right. \kern-\nulldelimiterspace}2}}& \ldots \\
\ldots & \ldots & \ldots & \ldots & \ldots & \ldots 
\end{array}}\right), 
\]
where \({U_1}= 2x\) and \({U_2}= 4{x^2}- 1\) are the Chebyshev polynomials of the second kind of 
degree 1 and 2 respectively (we drop the variable in the polynomials for brevity).

Let 
\[
{\bar \Phi_N}= \left({\begin{array}{*{20}{c}}
{- x{U_1}}&\vline&{- x{U_1}}&{{1 \mathord{\left/{\vphantom{1 2}}\right. \kern-\nulldelimiterspace}2}}&0& \ldots \\
\hline
{{1 \mathord{\left/{\vphantom{1 2}}\right. \kern-\nulldelimiterspace}2}}&\vline&{}&{}&{}&{}\\
0&\vline&{}&{{\Phi_{N - 1}}}&{}&{}\\
 \ldots &\vline&{}&{}&{}&{}
\end{array}}\right).
\]
We compute the determinant of \({\Phi_N}\) by expanding along the first row:
\begin{multline}\label{6}
\left|{{\Phi_N}}\right| ={U_2}\left|{{\Phi_{N - 1}}}\right| + x{U_1}\left|{{{\bar \Phi}_{N - 1}}}\right| \\
- \frac{1}{2}\left({x{U_1}\left|{{{\bar \Phi}_{N - 2}}}\right| + \frac{1}{2}{U_2}\left|{{\Phi_{N - 3}}}\right| - \frac{1}{8}\left|{{\Phi_{N - 4}}}\right|}\right).
\end{multline}
Similarly, 
\begin{equation}\label{7}
\left|{{{\bar \Phi}_N}}\right| =  - x{U_1}\left|{{\Phi_{N - 1}}}\right| + \frac{x}{2}{U_1}\left|{{\Phi_{N - 2}}}\right| + \frac{1}{4}\left|{{{\bar \Phi}_{N - 2}}}\right|.
\end{equation}
Now,  write \eqref{6} for \(\left|{{\Phi_{N + 1}}}\right|\) and substitute \(\left|{{{\bar \Phi}_N}}\right|\) from \eqref{7}:
\begin{multline*}
\left|{{\Phi_{N + 1}}}\right| ={U_2}\left|{{\Phi_N}}\right| -{x^2}U_1^2\left|{{\Phi_{N - 1}}}\right| + \frac{{{x^2}}}{2}U_1^2\left|{{\Phi_{N - 2}}}\right| \\
- \frac{1}{4}{U_2}\left|{{\Phi_{N - 2}}}\right| + \frac{1}{{16}}\left|{{\Phi_{N - 3}}}\right| - \frac{x}{2}{U_1}\left|{{{\bar \Phi}_{N - 1}}}\right| + \frac{x}{4}{U_1}\left|{{{\bar \Phi}_{N - 2}}}\right|.
\end{multline*}
Taking into account \eqref{6}, we will  exclude \(\left|{{{\bar \Phi}_{N - 1}}}\right|\) and \(\left|{{{\bar \Phi}_{N - 2}}}\right|\) from the last equation;
note that the structure of  \eqref{6} allows us to exclude the whole linear combination  \( - \frac{x}{2}{U_1}\left|{{{\bar \Phi}_{N - 1}}}\right| + \frac{x}{4}{U_1}\left|{{{\bar \Phi}_{N - 2}}}\right|\) at one go, which is crucial:
\[
\begin{array}{c}
\left|{{\Phi_{N + 1}}}\right| = \left({{U_2}- \frac{1}{2}}\right)\left|{{\Phi_N}}\right| + \left({\frac{1}{2}{U_2}-{x^2}U_1^2}\right)\left|{{\Phi_{N - 1}}}\right|\\
+ \left({\frac{{{x^2}}}{2}U_1^2 - \frac{1}{4}{U_2}}\right)\left|{{\Phi_{N - 2}}}\right| + \left({\frac{1}{{16}}- \frac{1}{8}{U_2}}\right)\left|{{\Phi_{N - 3}}}\right| + \frac{1}{{32}}\left|{{\Phi_{N - 4}}}\right|.
\end{array}\]
Note that \({U_2}- \frac{1}{2}= 4{x^2}- \frac{3}{2}\) and \(\frac{1}{2}{U_2}-{x^2}U_1^2 = - 4{x^4}+ 2{x^2}- \frac{1}{2}\). Consider the difference equation
\begin{equation}\label{8}
\tiny
{Z_{N + 1}}= \left({4{x^2}- \frac{3}{2}}\right)\left({{Z_N}- \frac{1}{8}{Z_{N - 3}}}\right) 
+ \left({- 4{x^4}+ 2{x^2}- \frac{1}{2}}\right)\left({{Z_{N - 1}}- \frac{1}{2}{Z_{N - 2}}}\right) + \frac{1}{{32}}{Z_{N - 4}}.
\end{equation}
It is a linear equation with constant coefficients (with respect to \(N\)) of the fifth order. For any initial values of \(Z_1,\ldots,Z_5\) there exists a unique solution \({Z_N}\) of \eqref{8} for \(N = 6, 7, 8, \ldots.\) Note that the function  \({Z_N}= \left|{{\Phi_N}}\right|\) satisfies \eqref{8}.

To prove the lemma, we have to show that the function \({\phi_N}= \frac{1}{{{2^{N + 2}}x}}{U_{N + 1}}(x){U'_{N + 1}}(x)\) satisfies  \eqref{8} and the initial conditions \({\phi_j}= \left|{{\Phi_j}}\right|\) for \(j = 1, 2, 3, 4, 5\).

We  apply Lemma 3 to write
\[
{\phi_N}= \frac{1}{{{2^{N + 3}}x(1 -{x^2})}}{U_{N + 1}}(x)\left({(N + 3){U_N}(x) - (N + 1){U_{N + 2}}(x)}\right).
\]
The equalities
\begin{equation}\label{9}
{\phi_j}= \left|{{\Phi_j}}\right|, \quad j = 1, 2, 3, 4, 5,
\end{equation}
can easily be checked directly.

Denote \({U_{N - 3}}= \xi \) and \({U_{N - 4}}= \eta \). By Lemma 3, 
\[
{U_{N + j}}= \xi{U_{j + 3}}- \eta{U_{j + 2}},\quad  j = -2, -1, 0, 1, 2, 3.\] 
After substituting \({\phi_N}\) in \eqref{8} we get 
\begin{align*}
& \frac{1}{8}{U_{N + 2}}\left({(N + 4){U_{N + 1}}- (N + 2){U_{N + 3}}}\right)\\
& = \left({{x^2}- \frac{3}{8}}\right){U_{N + 1}}\left({(N + 3){U_N}- (N + 1){U_{N + 2}}}\right)\\
& \quad + \left({- 2{x^4}+{x^2}- \frac{1}{4}}\right){U_N}\left({(N + 2){U_{N - 1}}- N{U_{N + 1}}}\right)\\
& \quad + \left({2{x^4}-{x^2}+ \frac{1}{4}}\right){U_{N - 1}}\left({(N + 1){U_{N - 2}}- (N - 1){U_N}}\right) \\
& \quad + \left({-{x^2}+ \frac{3}{8}}\right){U_{N - 2}}\left({N{U_{N - 3}}- (N - 2){U_{N - 1}}}\right)\\
&\quad + \frac{1}{8}{U_{N - 3}}\left({(N - 1){U_{N - 4}}- (N - 3){U_{N - 2}}}\right).
\end{align*}
Inserting the above formulas for \(U_{N+j}\), we can express everything through the parameters \(\xi \), \(\eta \) and the Chebyshev polynomials \({U_0}(x),\ldots,{U_5}(x)\).
As a result, the left and right sides become the same fourteenth degree  polynomial of the variables \(\xi \), \(\eta \), \(x\), \(N\):
\begin{align*}
&- ((256N + 512){x^{11}}- (640N + 1408){x^9}+ (576N + 1376){x^7}\\
&- (224N + 576){x^5} + (35N + 96){x^3}- (3/2N +9/2)x)\xi  \\
&- ((64N + 128){x^9}- (128N + 288){x^7}+ (84N + 208){x^5} \\
&- (20N + 54){x^3}+ (5/4N +7/2)x)\eta  + ((256N + 512){x^{10}}\\
&- (576N + 1280){x^8} + (448N + 1088){x^6}- (140N + 368){x^4}\\
&+ (15N + 42){x^2}- (1/4N +3/4))\xi \eta,
\end{align*}
which can be easily checked by computer.

Thus, \({\phi_N}\) satisfies  \eqref{8}. Because it satisfies the same initial conditions \eqref{9}, by the uniqueness of solution to the difference equation \eqref{8} with given initial conditions we  conclude that \({\phi_N}\equiv{\Phi_N}\). 
\end{proof}

\begin{corollary}\label{c1} 
The roots of the equation
\[
\det \left( 4{x^2}(I - A) - (I-B) \right) = 0
\]
are the real numbers 
\(\left\{{\pm{\mu_j}}\right\}_{j = 1}^{\left\lfloor{\frac{{N + 1}}{2}}\right\rfloor}\), 
\(\left\{{\pm{\nu_j}}\right\}_{j = 1}^{N - \left\lfloor{\frac{{N + 1}}{2}}\right\rfloor}\),
where \({\mu_j}= \cos \frac{{j\pi}}{{N + 2}}\), \({U'_{N + 1}}\left({{\nu_j}}\right) = 0\), which can be ordered as
\begin{gather*}
0 <{\mu_{\frac{{N + 1}}{2}}}<{\nu_{\frac{{N - 1}}{2}}}<  \ldots  <{\nu_1}<{\mu_1}\quad\text{if  \(N\) is odd}, 
\\
0 <{\nu_{\frac{N}{2}}}<{\mu_{\frac{N}{2}}}<  \ldots  <{\nu_1}<{\mu_1} \quad\text{if \(N\) is even}.
\end{gather*}
\end{corollary}

The corollary follows from the fact that the zeros of the polynomials \({U_{N + 1}}(x)\) and \({U'_{N + 1}}(x)\) are alternating.

\begin{lemma} Let  \(A\) and \(B\) be as in the previous lemma. 
The one-parameter family \[c{\delta ^{(0)}}\left({\cos \frac{{j\pi}}{{N + 2}}}\right),\] where \(c \in \mathbb R\), \({\delta ^{(0)}}(x) ={\left({{U_0}(x){U_1}(x), \ldots,{U_{N - 1}}(x){U_N}(x)}\right)^T}\), and \(T\) denotes transposition, 
is a solution to the system of linear equations in~\(\delta\),
\[
\left({4{{\cos}^2}\frac{{j\pi}}{N + 2}(I - A) - (I - B)}\right)\delta  = 0, \quad j = 1, \ldots, \left\lfloor{\frac{{N + 1}}{2}}\right\rfloor. 
\]
\end{lemma}

\begin{proof}Let \({\Phi_N}(x) = 4{x^2}(I - A) - (I - B)\), as in the proof of Lemma 4. Let us multiply this matrix by the vector \({\delta ^{(0)}}(x)\) and write the product in coordinates:
\begin{align*}
& \left({4{x^2}- 1}\right){U_0}(x){U_1}(x) - 2{x^2}{U_1}(x){U_2}(x) + \frac{1}{2}{U_2}(x){U_3}(x) = 0, \\
\begin{split}
& - 2{x^2}{U_0}(x){U_1}(x) + \left({4{x^2}- 1}\right){U_1}(x){U_2}(x) - 2{x^2}{U_2}(x){U_3}(x) \\
& + \frac{1}{2}{U_3}(x){U_4}(x) = 0,
\end{split}\\
& \frac{1}{2}{U_j}(x){U_{j + 1}}(x) - 2{x^2}{U_{j + 1}}(x){U_{j + 2}}(x) + \left({4{x^2}- 1}\right){U_{j + 2}}(x){U_{j + 3}}(x)\\
& - 2{x^2}{U_{j + 3}}(x){U_{j + 4}}(x) + \frac{1}{2}{U_{j + 4}}(x){U_{j + 5}}(x) = 0, \quad j = 0, \ldots, N - 5,\\
\begin{split}
& \frac{1}{2}{U_{N - 4}}(x){U_{N - 3}}(x) - 2{x^2}{U_{N - 3}}(x){U_{N - 2}}(x) \\
& + \left({4{x^2}- 1}\right){U_{N - 2}}(x){U_{N - 1}}(x) - 2{x^2}{U_{N - 1}}(x){U_N}(x) = 0, 
\end{split}\\
\begin{split}
& \frac{1}{2}{U_{N - 3}}(x){U_{N - 2}}(x) - 2{x^2}{U_{N - 2}}(x){U_{N - 1}}(x) \\
& + \left({4{x^2}- 1}\right){U_{N - 1}}(x){U_N}(x) = 0.
\end{split}
\end{align*}
Now let us make the substitutions
\[
{U_j}(x) \to \frac{1}{{2\sin \frac{\pi}{{N + 2}}}}\left({\xi^{j + 1} - \xi^{-(j + 1)}}\right)
\] 
and 
\[
x \to \cos \frac{\pi}{{N + 2}}= \frac{1}{2}\left({{\xi ^{j + 1}}+{\xi ^{- (j + 1)}}}\right).
\] 
The first \(N - 2\) equations are identities for any \(\xi \), in particular for \(\xi ={e^{i\frac{{j\pi}}{{N + 2}}}}\). The next to last and the last equations are equivalent to 
\begin{align*}
\left({\xi ^{2N + 2}- 1}\right) \left({1 -{\xi ^{2N + 4}}}\right) &= 0,\\
\left({- 2\xi^{2N + 2}}-{\xi^{2N}}+{\xi ^2}+ 2 \right)\left({1 -{\xi ^{2N + 4}}}\right) &= 0.
\end{align*}
Both these equations have roots \(\xi ={e^{i\frac{{j\pi}}{{N + 2}}}}\). 
\end{proof}

We will need the following trigonometric identity.

\begin{lemma}
For any \(k = 0, 1, \ldots, N - 1\),
\begin{multline*}
2\sum_{j = 1}^{N - k}{\sin \frac{{(j + 1)\pi}}{{N + 2}}\sin \frac{{(j + k)\pi}}{{N + 2}}}= \left({N - k - 1}\right)\sin \frac{{k\pi}}{{N + 2}}\sin \frac{\pi}{{N + 2}} \\
+ \frac{1}{2}\frac{{\cos \frac{\pi}{{N + 2}}}}{{\sin \frac{\pi}{{N + 2}}}}\left({\left({N - k + 3}\right)\sin \frac{{(k + 1)\pi}}{{N + 2}}- \left({N - k + 1}\right)\sin \frac{{(k - 1)\pi}}{{N + 2}}}\right).
\end{multline*}
\end{lemma}

\begin{proof} 
We have
\allowdisplaybreaks
\begin{align*}
& 2\sum_{j = 1}^{N - k}  \sin \frac{{(j + 1)\pi}}{{N + 2}}\sin \frac{{(j + k)\pi}}{{N + 2}} \\
& = \sum_{j = 1}^{N - k}\left({\cos \frac{{(k - 1)\pi}}{{N + 2}}- \cos \frac{{(2j + k + 1)\pi}}{{N + 2}}}\right) \\
& = \left({N - k}\right)\cos \frac{{(k - 1)\pi}}{{N + 2}} - \cos \frac{{(k + 1)\pi}}{{N + 2}}\sum_{j = 1}^{N - k}\cos\frac{{2j\pi}}{{N + 2}} \\
& +\sin \frac{{(k + 1)\pi}}{{N + 2}}\sum_{j = 1}^{N - k}{\sin \frac{{2j\pi}}{{N + 2}}} = \left({N - k}\right)\cos \frac{{(k - 1)\pi}}{{N + 2}} \\
& - \cos\frac{{(k + 1)\pi}}{{N + 2}} \left(-\cos^2\frac{{(N - k + 1)\pi}}{{N + 2}}+ \frac{\cos\frac{\pi}{N + 2}}{\sin\frac{\pi}{N + 2}} \sin\frac{(N - k + 1)\pi}{N + 2} \right. \\
& \times \left.\cos\frac{{(N - k + 1)\pi}}{{N + 2}}\right) + \sin \frac{{(k + 1)\pi}}{{N + 2}} \left(- \frac{{\cos \frac{{\pi}}{{N + 2}}}}{{\sin \frac{{\pi}}{{N + 2}}}}{{\cos}^2}\frac{{(N - k + 1)\pi}}{{N + 2}}\right. \\
& - \sin \frac{{(N - k + 1)\pi}}{{N + 2}}\cos \frac{{(N - k + 1)\pi}}{{N + 2}} + \frac{{{{\cos}^3}\frac{{\pi}}{{N + 2}}}}{{\sin \frac{{\pi}}{{N + 2}}}}+ \sin \frac{{\pi}}{{N + 2}}\\
& \times\left.\cos \frac{{\pi}}{{N + 2}}\right) = \left({N - k}\right)\cos \frac{{(k - 1)\pi}}{{N + 2}}+ \frac{1}{{\sin \frac{{\pi}}{{N + 2}}}}\left({\left({- \sin \frac{{(k + 1)\pi}}{{N + 2}}}\right.\cos \frac{{\pi}}{{N + 2}}}\right.\\
& \left.{+ \sin \frac{{\pi}}{{N + 2}}\cos \frac{{(k + 1)\pi}}{{N + 2}}}\right) + \sin \frac{{(k + 1)\pi}}{{N + 2}}\cos \frac{{(k + 1)\pi}}{{N + 2}}\left({\cos \frac{{\pi}}{{N + 2}}\cos \frac{{(k + 1)\pi}}{{N + 2}}}\right.  \\
& \left.{\left.{+ \sin \frac{{\pi}}{{N + 2}}\sin \frac{{(k + 1)\pi}}{{N + 2}}}\right) + \cos \frac{{\pi}}{{N + 2}}\sin \frac{{(k + 1)\pi}}{{N + 2}}}\right)  \\
&  = \left({N - k}\right)\cos \frac{{(k - 1)\pi}}{{N + 2}}+ \frac{1}{{\sin \frac{{\pi}}{{N + 2}}}}\left({\cos \frac{{(k + 1)\pi}}{{N + 2}}\sin \frac{{\pi}}{{N + 2}}+ \sin \frac{{(k + 1)\pi}}{{N + 2}}\cos \frac{{\pi}}{{N + 2}}}\right)  \\
&  = \left({N - k}\right)\cos \frac{{(k - 1)\pi}}{{N + 2}}+ \frac{1}{{\sin \frac{{\pi}}{{N + 2}}}}\sin \frac{{(k + 2)\pi}}{{N + 2}}.
\end{align*}
On the other hand,
\[\begin{array}{l}
\left({N - k - 1}\right)\sin \frac{{k\pi}}{{N + 2}}\sin \frac{{\pi}}{{N + 2}}+ \frac{1}{2}\frac{{\cos \frac{\pi}{{N + 2}}}}{{\sin \frac{{\pi}}{{N + 2}}}}\left({\left({N - k + 3}\right)\sin \frac{{(k + 1)\pi}}{{N + 2}}- \left({N - k + 1}\right)\sin \frac{{(k - 1)\pi}}{{N + 2}}}\right)  \\
 = \left({N - k}\right)\left({\sin \frac{{k\pi}}{{N + 2}}\sin \frac{{\pi}}{{N + 2}}+ \frac{{\cos \frac{\pi}{{N + 2}}\sin \frac{{(k + 1)\pi}}{{N + 2}}}}{{2\sin \frac{{\pi}}{{N + 2}}}}- \frac{{\cos \frac{\pi}{{N + 2}}\sin \frac{{(k - 1)\pi}}{{N + 2}}}}{{2\sin \frac{{\pi}}{{N + 2}}}}}\right)  \\
 + \left({- \sin \frac{{k\pi}}{{N + 2}}\sin \frac{{\pi}}{{N + 2}}+ \frac{{3\cos \frac{\pi}{{N + 2}}\sin \frac{{(k + 1)\pi}}{{N + 2}}}}{{2\sin \frac{{\pi}}{{N + 2}}}}- \frac{{\cos \frac{\pi}{{N + 2}}\sin \frac{{(k - 1)\pi}}{{N + 2}}}}{{2\sin \frac{{\pi}}{{N + 2}}}}}\right)  
\end{array}
\]
\[
\begin{array}{l}
= \left({N - k}\right)\left({\sin \frac{{k\pi}}{{N + 2}}\sin \frac{{\pi}}{{N + 2}}+ \frac{{\cos \frac{\pi}{{N + 2}}}}{{2\sin \frac{{\pi}}{{N + 2}}}}2\cos \frac{{k\pi}}{{N + 2}}\sin \frac{\pi}{{N + 2}}}\right)  \\
+ \left({- \sin \frac{{k\pi}}{{N + 2}}\sin \frac{{\pi}}{{N + 2}}+ \frac{{\cos \frac{\pi}{{N + 2}}\sin \frac{{(k + 1)\pi}}{{N + 2}}}}{{\sin \frac{{\pi}}{{N + 2}}}}+ \frac{{\cos \frac{\pi}{{N + 2}}}}{{2\sin \frac{{\pi}}{{N + 2}}}}2\cos \frac{{k\pi}}{{N + 2}}\sin \frac{\pi}{{N + 2}}}\right)  \\
= \left({N - k}\right)\cos \frac{{(k + 1)\pi}}{{N + 2}}+ \cos \frac{{(k + 1)\pi}}{{N + 2}}+ \frac{{\cos \frac{\pi}{{N + 2}}\sin \frac{{(k + 1)\pi}}{{N + 2}}}}{{\sin \frac{{\pi}}{{N + 2}}}} \\
= \left({N - k}\right)\cos \frac{{(k + 1)\pi}}{{N + 2}}+ \frac{{\sin \frac{{(k + 2)\pi}}{{N + 2}}}}{{\sin \frac{{\pi}}{{N + 2}}}}.
\end{array}
\]
These imply the identity in the lemma.
\end{proof}

\section{The main result}

Now, let us turn to  Problem \ref{p1}. Recall that the polynomials considered are of the form \(F\left( z \right) = z+\sum_{j = 2}^N{{a_j}{z^j}}\).

\begin{lemma}
If a trigonometric polynomial \(\Im \left({F\left({{e^{it}}}\right)}\right)\) has a zero in \(\left({0, \pi}\right)\), then
\[
\min_t \left\{{\Re\left({F\left({{e^{it}}}\right)}\right):\Im \left({F\left({{e^{it}}}\right)}\right) = 0}\right\}<{J_N}.
\]
\end{lemma}

\begin{proof} Set
\[
C(t) = \Re\left({F\left({{e^{it}}}\right)}\right) = \sum_{j = 1}^N{{a_j}\cos jt, \quad}S(t) = \Im \left({F\left({{e^{it}}}\right)}\right) = \sum_{j = 1}^N{{a_j}\sin jt,}
\]
where \({a_1}= 1\). Since the coefficients $a_j$ are real, we can restrict ourselves to \(t \in \left[{0, \pi}\right]\). 

Let \({\rho_N}(a_2, \ldots, a_N) = \min_{t \in [0, \pi]}\left\{{C(t): S (t) = 0} \right\}\). It is clear that  \(\sup_{a_j}{\rho_N}(a_2, \ldots, a_N)\) 
%
increases in \(N\), and by Lemma 1 it is bounded from above by \(-\frac{1}{N}\). In addition, $\rho_2(a_2)=-1/2$ if $a_2=1/2$ and  by Lemma 1 we have \({J_2}= - 1/2\). It follows from the comments to Lemma~1 that 
\[
\sum_{j = 1}^N{\left|{{a_j}}\right|}\le \frac{1}{2}\left({{2^N}- 1}\right).
\] 
So the supremum \(\sup_{{a_j}}{{\rho_N}(a_2, \ldots, a_N)}\) can be taken over a bounded set 
\({A_R}= \{{\left({a_2, \ldots, a_N}\right):\sum_{j = 2}^N{\left|{{\alpha_j}}\right| \le R}}\}\)
for some \(R>0\).

The function \({\rho_N}(a_2, \ldots, a_N)\) is continuous on  \({A_R}\) except at those points \(\left({{a_2}\ldots, a_{N}}\right)\) for which the minimum value of \(C(t)\) is achieved at a zero of \(S(t)\) where this function does not change sign. The lower limit of  \({\rho_N}(a_2, \ldots, a_N)\) at each discontinuity point is equal to the value of the function. This means that on  \({A_R}\) the function \({\rho_N}(a_2, \ldots, a_N)\) is lower semicontinuous.

Along with  \({\rho_N}(a_2, \ldots, a_N)\) we will consider the function
\[
{\hat \rho_N}(a_2, \ldots, a_N) =\min_{}\left\{{C(t):t \in T \cup \left\{\pi \right\}}\right\},
\]
where $T$ is the set of points in \((0, \pi )\) where  \(S(t)\) changes sign. 
As \(T \cup \left\{\pi \right\}\) is contained in the set of zeros of  \(S(t)\), we have
\[
\sup_{({a_2},\ldots, a_N) \in{A_R}} \rho_N(a_2, \ldots, a_N) \le \sup_{({a_2},\ldots, a_N) \in{A_R}} \hat \rho_N(a_2, \ldots, a_N).
\]
Since \({\hat \rho_N}(a_2, \ldots, a_N)\) is upper semicontinuous, it attains on \({A_R}\)  a maximal value
\[\hat \rho_N^{(0)}=\max_{({a_2},\ldots, a_N) \in{A_R}} \hat \rho_N(a_2, \ldots, a_N).
\] 
It follows from Lemma 1 that \(\hat \rho_N^{(0)}< 0\).

Let us call a pair of trigonometric polynomials \(\left\{{{C^{(0)}}(t),{S^{(0)}}(t)}\right\}\) at which the maximum is achieved an \emph{optimal pair.} Let us assume that for the optimal polynomial \({S^{(0)}}(t)\) the set \(T = \left\{t_1, \ldots, t_q\right\}\), where  
\(1 \le q \le N - 1\), is not empty. Assume moreover that
\[
\min \left\{{{C^{(0)}}({t_1}),\ldots,{C^{(0)}}(t_q)}\right\}=C^{(0)}({t_1}),
\]
and 
\begin{align*}
C^{(0)}(t_1) &={C^{(0)}}({t_j}), \quad j = 1,\ldots, m\ (1 \le m \le q), \\
C^{(0)}(t_1) &< C^{(0)}({t_j}), \quad j = m + 1, \ldots, q.
\end{align*}

Our goal is to show that  \(T\) is in fact empty. For this purpose, we construct an auxiliary class of polynomials with the first coefficient equal to~1, and  compare the value of  \({\hat \rho_N}(a_2, \ldots, a_N)\) for  \(\left\{{{C^{(0)}}(t),{S^{(0)}}(t)}\right\}\) with the value  for a pair of  conjugate trigonometric polynomials from the auxiliary class.

Two cases  are possible: 1. \({C^{(0)}}({t_1}) <{C^{(0)}}(\pi )\) or 2. \({C^{(0)}}(\pi ) \le {C^{(0)}}({t_1})\).\\

\noindent
Case 1. According to Lemma 2,
\begin{align*}
{S^{(0)}}(t) &= \prod_{k = 1}^m{(\cos t - \cos{t_k})}\cdot \sum_{j = m}^{N - m}{{\alpha_j}}\sin jt,
\\
{C^{(0)}}(t) &= - \frac{{{\alpha_m}}}{{{2^m}}}+ \prod_{k = 1}^m{(\cos t - \cos{t_k})}\cdot \sum_{j = m}^{N - m}{{\alpha_j}}\cos jt.
\end{align*}
As \({C^{(0)}}({t_1}) = - \frac{{{\alpha_m}}}{{{2^m}}}\) and \(\hat \rho_N{(0)}< 0\), we have \(\alpha_m^{(m)}> 0\). In addition,  \({a_1}= 1\), so by Lemma 2, 
\(\frac{{{\alpha_{m + 1}}}}{{{2^m}}}-\frac{{{\alpha_m}}}{{{2^m - 1}}}\sum_{j = 1}^m{\cos{t_j}}= 1\).

Let us construct an auxiliary class of trigonometric polynomials
\begin{align*}
S({\theta_1}\ldots,{\theta_m};t) &= N({\theta_1}\ldots,{\theta_m}) \cdot \prod_{k = 1}^m{(\cos t - \cos{\theta_k})}\sum_{j = m}^{N - m}{{\alpha_j}\sin jt},
\\
C({\theta_1}\ldots,{\theta_m};t) &= N({\theta_1}\ldots,{\theta_m}) \cdot \left({- \frac{{{\alpha_m}}}{{{2^m}}}+ \prod_{k = 1}^m{(\cos t - \cos{\theta_k})}\sum_{j = m}^{N - m}{{\alpha_j}\cos jt}}\right),
\end{align*}
where  \(N({\theta_1}\ldots,{\theta_m})\) ensures that the first coefficient of  \(S({\theta_1}\ldots,{\theta_m};t)\) and \(C({\theta_1}\ldots,{\theta_m};t)\) is 1:
\[
N({\theta_1}\ldots,{\theta_m}) = \frac{1}{\frac{\alpha_{m + 1}}{2^m} - \frac{\alpha_m}{2^m - 1}\sum_{j = 1}^m \cos{\theta_j}}.
\]
The polynomials \(C({\theta_1}\ldots,{\theta_m};t)\) and \(S({\theta_1}\ldots,{\theta_m};t)\) are conjugate and for \(\left\{{{\theta_1},\ldots,{\theta_m}}\right\}= \left\{{{t_1}\ldots,{t_m}}\right\}\) coincide with  \({C^{(0)}}(t)\) and \({S^{(0)}}(t)\) respectively. Note that the set of sign changes o for  \(S({\theta_1}\ldots,{\theta_m};t)\)  is \(T_{\theta}= \left\{\theta_1, \ldots, \theta_m, t_{m + 1}, \ldots, t_q\right\}\).

Finally,  \(S({\theta_1}\ldots,{\theta_m};t)\), \(C({\theta_1}\ldots,{\theta_m};t)\) can be written as
\begin{align*}
S({\theta_1}, \ldots,{\theta_m};t) &= \frac{{\prod_{k = 1}^m{(\cos t - \cos{\theta_k})}\sum_{j = m}^{N - m}{{\alpha_j} \sin jt}}}{{\frac{{{\alpha_{m + 1}}}}{{{2^m}}}- \frac{{{\alpha_m}}}{{{2^{m - 1}}}}\sum_{j = 1}^m{\cos{\theta_j}}}}, 
\\
C({\theta_1}, \ldots,{\theta_m};t) &= \frac{{- \frac{{{\alpha_m}}}{{{2^m}}}+ \prod_{k = 1}^m{(\cos t - \cos{\theta_k})}\sum_{j = m}^{N - m}{{\alpha_j}\cos jt}}}{{\frac{{{\alpha_{m + 1}}}}{{{2^m}}}- \frac{{{\alpha_m}}}{{{2^{m - 1}}}}\sum_{j = 1}^m{\cos{\theta_j}}}}.
\end{align*}

Let us show that the value \({\hat \rho_N}\) for 
\(\left\{{C({\theta_1}\ldots,{\theta_m};t), S({\theta_1}\ldots,{\theta_m};t)}\right\}\) is greater than for  \(\left\{{{C^0}(t),{S^0}(t)}\right\}\), so the pair \(\left\{{{C^0}(t),{S^0}(t)}\right\}\) is not  optimal.

It is clear that
\begin{multline*}
C({\theta_1}, \ldots,{\theta_m};{\theta_1}) =  \ldots  = C({\theta_1}, \ldots,{\theta_m};{\theta_m}) \\
= - \frac{{\frac{{{\alpha_m}}}{{{2^m}}}}}{{\frac{{{\alpha_{m + 1}}}}{{{2^m}}}- \frac{{{\alpha_m}}}{{{2^{m - 1}}}}\sum_{j = 1}^m{\cos{\theta_j}}}}=\frac{1}{{2\sum_{j = 1}^m{\cos{\theta_j}} - \frac{{{\alpha_{m + 1}}}}{{{a_m}}}}}.
\end{multline*}
If \(0 < \left|{{\theta_j}-{t_j}}\right| < \varepsilon\) for all \(j = 1, \ldots, m\), then the value of  \(\frac{1}{2\sum_{j = 1}^m \cos\theta_j - \frac{\alpha_m}{\alpha_{m - 1}}}\) is close to \( - \frac{{{\alpha_m}}}{{{2^m}}}={C^{(0)}}\left({{t_1}}\right)\).
 
Since the cosine function decreases on \(\left(0, \pi\right)\), the value of \(C({\theta_1}\ldots, {\theta_m}; \theta_1)\) increases in each of the parameters \(\theta_1, \ldots, \theta_m\). By continuity of trigonometric polynomials in \(t\) and in all  coefficients we have 
\begin{align*}
&C({\theta_1}\ldots,{\theta_m};{\theta_j}>{C^0}({t_j}),\quad j = 1, \ldots, m,
\\
&\left|{C({\theta_1}\ldots,{\theta_m};{t_j}) -{C^0}({t_j})}\right| < \delta,\quad j = m + 1, \ldots, q,
\\
&\left|{C({\theta_1}\ldots,{\theta_m};\pi ) -{C^0}(\pi )}\right| < \delta,
\end{align*}
for any arbitrarily small \(\delta \) for an appropriate choice of \(\varepsilon \). These inequalities mean that 
\begin{multline*}
\min \{C({\theta_1}, \ldots,{\theta_m};{\theta_1}), \ldots, C({\theta_1}, \ldots,{\theta_m};{\theta_m}), \\
C({\theta_1}, \ldots,{\theta_m};{t_{m + 1}}), \ldots, C({\theta_1}, \ldots,{\theta_m};{t_q}), \ldots, C({\theta_1}, \ldots,{\theta_m};\pi )\}
\end{multline*}
is larger than \(\min \left\{{{C^0}({t_1}),\ldots,{C^0}({t_q}),{C^0}(\pi )}\right\}\), at least for sufficiently small positive values of \({\theta_j}-{t_j}\), \(j = 1, \ldots, m\). Thus,  \(\left\{{{C^0}(t),{S^0}(t)}\right\}\) is not optimal.
\medskip

\noindent
Case 2. We have \(C^0(\pi ) = - \frac{{{\alpha_m}}}{{{2^m}}}- \prod_{k = 1}^m{(1 + \cos{t_k})}\sum_{j = m}^{N - m} (-1)^j\alpha_j\),
\[
C({\theta_1}, \ldots,{\theta_m};\pi ) =  - \frac{{\frac{{{\alpha_m}}}{{{2^m}}}+ \prod_{k = 1}^m {(1 + \cos{\theta_k})} \sum_{j = m}^{N - m}{{{( - 1)}^j}{\alpha_j}}}}{{\frac{{{\alpha_{m + 1}}}}{{{2^m}}}- \frac{{{\alpha_m}}}{{{2^{m - 1}}}}\sum_{j = 1}^m{\cos{\theta_j}}}}, 
\]
and \(C({t_1}\ldots,{t_m};\pi ) ={C^0}(\pi )\). Since by the assumptions \({C^0}(\pi ) \le  - \frac{{{\alpha_m}}}{{{2^m}}}\), it follows that \(\sum_{j = m}^{N - m}{{{( - 1)}^j}{\alpha_j}} \ge 0\), and the value 
\[\frac{{{\alpha_m}}}{{{2^m}}}+ \prod_{k = 1}^m{(1 + \cos{\theta_k})}\sum_{j = m}^{N - m}{{{( - 1)}^j}{\alpha_j}}\]
decreases in each parameter \({\theta_1},\ldots,{\theta_m}\). 
For small positive  \({\theta_j}-{t_j}\), \(j = 1, \ldots, m\) the value  
\(\frac{{{\alpha_{m + 1}}}}{{{2^m}}}- \frac{{{\alpha_m}}}{{{2^{m - 1}}}}\sum_{j = 1}^m{\cos{\theta_j}}\)
is close to 1 and increases in each parameter \({\theta_1},\ldots,{\theta_m}\). Thus,  \(C({\theta_1},\ldots,{\theta_m};\pi )\) increases in each  \({\theta_1},\ldots,{\theta_m}\) as well. Moreover, the values  
\(C({\theta_1},\ldots,{\theta_m};{\theta_j})\), \(j = 1, \ldots, m\), are all equal and also increase in    \({\theta_1},\ldots,{\theta_m}\). Hence, in this case too, the pair \(\left\{{{C^0}(t),{S^0}(t)}\right\}\) is not  optimal.

Thus, it is shown that the set $T$ is empty, i.e., the optimal pair \(\left\{{{C^0}(t),{S^0}(t)}\right\}\) satisfies \({S^0}(t) \ge 0\) for all \(t \in \left[{0, \pi}\right]\).
\end{proof}

\subsection{Remarks}
The idea of the proof  is the construction of an auxiliary set of polynomials 
with further variation of the original polynomial over this set. Suppose for  \({S^{(0)}}(t)\) the set \(T\) is a disjoint union \({T_1}\cup{T_2}\), where \({T_1}= \left\{{{t_1},\ldots,{t_m}}\right\}\), \({T_2}= \left\{{{t_{m + 1}},\ldots,{t_q}}\right\}\), and both sets are nonempty. The trigonometric polynomial \({S^{(0)}}(t)\) is determined by the parameters \({\alpha_m}, \ldots,{\alpha_{N - m}}\) and by the set \({T_1}\). 

Let us use the parameters \({\alpha_m}, \ldots,{\alpha_{N - m}}\)  to build an auxiliary set of polynomials.

First,  let \({\theta_j}={t_j}\), \(j = 1, \ldots, m\). Further, we will vary the parameters \({\theta_1},\ldots,{\theta_m}\) independently. 

If \({C^{(0)}}(\pi ) <{C^{}{(0)}}({t_1})\) then as a result of variation we can ensure that \({C^{(1)}}(\pi ) ={C^{(1)}}({t_1})\), where  \({C^{(1)}}(t)\) is obtained from  \({C^{(0)}}(t)\) by variation of parameters. Then we recalculate the set \({T_1}= \left\{{{t_{11}},\ldots,{t_{1{m_1}}}}\right\}\) for the polynomial \({S^{(1)}}(t)\). 

The parameters \({\alpha_m}, \ldots,{\alpha_{N - m}}\) remain the same because \({T_2}\) does not change. Then we construct a new auxiliary set. 

For the new set we again vary \({\theta_1},\ldots,{\theta_{{m_1}}}\) until we get  \({C^{(2)}}({t_s}) ={C^{(1)}}({t_{11}})\), \({t_s}\in{T_2}\). This means that \({T_2}\) has at least  one element less. For \({S^{(2)}}(t)\) we construct a new auxiliary set. By variation of parameters one can make  \({T_2}\) empty. In this case  \({T_1}\) will be either empty, or a singleton. In the latter case by variation over the auxiliary set of polynomials we achieve that \({T_1}\) is  empty. Finally, the set \(T\) will be empty.
\medskip

It follows from  Lemma 7 that the optimal polynomial is necessarily  
typically real. Recall that the function is typically real in $\mathbb D$ if it is real for at every real point of the disc and in all the others points of the disc we have $\Im\{f(z)\}\Im\{z\}>0.$ The set of all typically real polynomials of degree
\(\le N\) will be denoted by \({{\rm T}_N}\).

\begin{theorem}
We have 
\begin{align*}
J_N &=  - \frac{1}{4}{\sec ^2}\frac{\pi}{{N + 2}}, 
\\
a_j^{(0)}&= \frac{1}{{{{U'}_N}\left({\cos \frac{\pi}{{N + 2}}}\right)}}{U'_{N - j + 1}}\left({\cos \frac{\pi}{{N + 2}}}\right){U_{j - 1}}\left({\cos \frac{\pi}{{N + 2}}}\right).
\end{align*}
\end{theorem}

\begin{proof}
By Lemma 7,
\begin{align*}
J_N&= \sup_{{a_1}= 1}\left({\min_t \left\{{\Re\left({F\left({{e^{it}}}\right)}\right):\Im \left({F\left({{e^{it}}}\right)}\right) = 0}\right\}}\right)\\
&\leq
 \max_{{a_1}= 1}\left({\min_{t \in T \cup \left\{\pi \right\}}\left\{{\Re\left({F\left({{e^{it}}}\right)}\right)}\right\}}\right),
\end{align*}
where $T$ is the set of points in \((0, \pi )\) at which \(\Im \left({F\left({{e^{it}}}\right)}\right)\) changes sign. By Lemma 7 the maximum is achieved on the class \({{\rm T}_N}\), and if  \(F_N^{(0)}\left( z \right)\) is the optimal polynomial, then \({J_N}= F_N^{(0)}\left({- 1}\right)\).

Further, we have
\[\Im\left({F_{N}^{(0)}\left({{e^{it}}}\right)}\right) = \sin t + \sum_{j = 2}^N{{\alpha_j}\sin jt} = {\beta_0}\sin t\left({1 + 2\sum_{j = 1}^{N - 1}{{\beta_j}\cos jt}}\right),
\]
where \({\beta_0}= 1 + \sum_{j = 1}^{\left\lfloor{\frac{{N - 1}}{2}}\right\rfloor}{{\alpha_{2j + 1}}}\), 
\({\beta_1}= \frac{1}{{{\beta_0}}}\sum_{j = 1}^{\left\lfloor{\frac{N}{2}}\right\rfloor}{{\alpha_{2j}}}\), \({\beta_2}= \frac{1}{{{\beta_0}}}\sum_{j = 1}^{\left\lfloor{\frac{{N - 1}}{2}}\right\rfloor}{{\alpha_{2j + 1}}}\), $\ldots$, \({\beta_{N - 1}}= \frac{1}{{{\beta_0}}}{\alpha_N}\).
By Lemma 7 the trigonometric polynomial \(1 + 2\sum_{j = 1}^{N - 1}{{\beta_j}\cos jt}\) is nonnegative on \(\left[0, \pi\right]\), and therefore
\begin{align*}
{J_N}&= F_N^{(0)}\left({- 1}\right) = \sup_{F \in{{\rm T}_N}}\left\{{{F_N}\left({- 1}\right)}\right\}\\
&=\sup_{{\alpha_j}}\left\{{- 1 +{\alpha_2}-{\alpha_3}+  \ldots :1 + 2\sum_{j = 1}^{N - 1}{{\beta_j}\cos jt} \ge 0}\right\}\\
&= \sup_{{\beta_j}}\left\{{- \frac{{1 -{\beta_1}}}{{1 -{\beta_2}}}:1 + 2\sum_{j = 1}^{N - 1}{{\beta_j}\cos jt} \ge 0}\right\}. 
\end{align*}

By the Fej\'er-Riesz theorem \cite[6.5,  problem 41]{15.} any nonnegative trigonometric polynomial can be represented as the square of the modulus of some polynomial: 
\[
{\left|{{d_1}+{d_2}{e^{it}}+  \ldots  +{d_N}{e^{i(N - 1)t}}}\right|^2}= 1 + 2\sum_{j = 1}^{N - 1}{{\beta_j}\cos jt}.
\] 
Consequently, 
\begin{align}\label{10}
&d_1^2 + d_2^2 + \ldots + d_n^2 = 1,
\\
\label{11}
&{d_1}{d_2}+{d_2}{d_3}+ \ldots +{d_{N - 1}}{d_N}={\beta_1},
\\
\label{12}
&{d_1}{d_3}+{d_2}{d_4}+ \ldots +{d_{N - 2}}{d_N}={\beta_2}, 
\\ \notag
&\ldots
\\ \notag
&{d_1}{d_N}={\beta_{N - 1}}.
\end{align}
Then we can write 
\({J_N}=\max_{{d_j}}\left\{{- \frac{{1 -{d^T}Ad}}{{1 -{d^T}Bd}}:{d^T}d = 1}\right\}\),
 where
\(d ={\left({{d_1}\ldots,{d_N}}\right)^T}\), and
\begin{align*}
A &= \left({\begin{array}{*{20}{c}}
0&{{1 \mathord{\left/{\vphantom{1 2}}\right. \kern-\nulldelimiterspace}2}}&0&0& \ldots \\
{{1 \mathord{\left/{\vphantom{1 2}}\right. \kern-\nulldelimiterspace}2}}&0&{{1 \mathord{\left/{\vphantom{1 2}}\right. \kern-\nulldelimiterspace}2}}&0& \ldots \\
0&{{1 \mathord{\left/{\vphantom{1 2}}\right. \kern-\nulldelimiterspace}2}}&0&{{1 \mathord{\left/{\vphantom{1 2}}\right. \kern-\nulldelimiterspace}2}}& \ldots \\
0&0&{{1 \mathord{\left/{\vphantom{1 2}}\right. \kern-\nulldelimiterspace}2}}&0& \ldots \\
 \ldots & \ldots & \ldots & \ldots & \ldots 
\end{array}}\right),
\\
B &= \left({\begin{array}{*{20}{c}}
0&0&{{1 \mathord{\left/{\vphantom{1 2}}\right. \kern-\nulldelimiterspace}2}}&0& \ldots \\
0&0&0&{{1 \mathord{\left/{\vphantom{1 2}}\right. \kern-\nulldelimiterspace}2}}& \ldots \\
{{1 \mathord{\left/{\vphantom{1 2}}\right. \kern-\nulldelimiterspace}2}}&0&0&0& \ldots \\
0&{{1 \mathord{\left/{\vphantom{1 2}}\right. \kern-\nulldelimiterspace}2}}&0&0& \ldots \\
 \ldots & \ldots & \ldots & \ldots & \ldots \end{array}}\right)
\end{align*}
are the \(N\times N\) matrices of the quadratic forms \eqref{11} and \eqref{12}, respectively. Consequently, 
\begin{align*}
{J_N} &=\max_{{d_j}}\left\{{- \frac{{1 - \frac{{{d^T}Ad}}{{{d^T}d}}}}{{1 - \frac{{{d^T}Bd}}{{{d^T}d}}}}}\right\}=\max_{{d_j}}\left\{{- \frac{{{d^T}(I - A)d}}{{{d^T}(I - B)d}}}\right\}\\ 
 &= -\min_{{d_j}}\left\{{\frac{{{d^T}(I - A)d}}{{{d^T}(I - B)d}}}\right\},
\end{align*} 
where \(I\) is the unit matrix. It is clear that the matrices  \(I - A\) and \(I - B\) are positive definite.

The problem of finding \({J_N}= - \min_{{d_j}}\left\{{\frac{{{d^T}(I - A)d}}{{{d^T}(I - B)d}}}\right\}\) reduces to
computing generalized eigenvalues \cite{16.,17.}: if \({\lambda_1}\le \cdots \le{\lambda_N}\) are the roots of the equation
\[
\det \left({(I - A) - \lambda (I-B)}\right) = 0,
\]
then \({J_N}= -{\lambda_1}\). Because  \(I - A\) and \(I - B\) are positive definite, we have \({\lambda_1}> 0\). The corresponding minimum is achieved on the generalized eigenvector \({\delta ^{(0)}}\), which is determined from the relation \((I - A){\delta ^{(0)}}={\lambda_1}(I - B){\delta ^{(0)}}\).

By Corollary \ref{c1},
\[
{J_N}= -{\lambda_1}= - \frac{1}{{4{\cos^2}\frac{\pi}{{N + 2}}}},
\]
and by Lemma 5,
\begin{multline*}
{\delta ^{(0)}}= c\left(U_0\left({\cos \frac{\pi}{{N + 2}}}\right){U_1}\left({\cos \frac{\pi}{{N + 2}}}\right), \ldots,\right. \\
\left.{U_{N - 1}}\left({\cos \frac{\pi}{{N + 2}}}\right){U_N}\left({\cos \frac{\pi}{{N + 2}}}\right)\right)^T.
\end{multline*}

The next step is to find the coefficients \(a_j^{(0)}\), \(j = 1, \ldots, N\). Note that the scaling constant $c$ appears as a factor to both numerator and denominator in formula for $a_j^{(0)}$ below, thus without loss of generality we can take $c=1.$

We have the following relations between \(\delta_j^{(0)}\), \(\beta_j^{(0)}\) and \(a_j^{(0)}\):
\begin{align*}
\beta_j^{(0)}&= \sum_{k = 1}^{N - j}{\delta_k^{(0)}}\delta_{k + j}^{(0)},\quad j = 0, \ldots, N - 1,\\
\beta_N^{(0)}&= \beta_{N + 1}^{(0)}= 0,
\\
a_j^{(0)}&= \frac{1}{{\beta_0^{(0)}- \beta_2^{(0)}}}\left({\beta_{j - 1}^{(0)}- \beta_{j + 1}^{(0)}}\right),\quad j = 1, \ldots, N, 
\end{align*}
where \(\delta_j^{(0)}={U_{j - 1}}\left({\cos \frac{\pi}{{N + 2}}}\right){U_j}\left({\cos \frac{\pi}{{N + 2}}}\right)\), \(j = 1, \ldots, N\).

Denote \({x_0}= \cos \frac{\pi}{{N + 2}}\). Then
\[
\beta_0^{(0)}- \beta_2^{(0)}= \sum_{j = 1}^N{{{\left({{U_{j - 1}}\left({{x_0}}\right){U_j}\left({{x_0}}\right)}\right)}^2}} - \sum_{j = 1}^{N - 2}{{U_{j - 1}}\left({{x_0}}\right){U_j}\left({{x_0}}\right){U_{j + 1}}\left({{x_0}}\right){U_{j + 2}}\left({{x_0}}\right)}.
\]
Lemma 3 implies that 
\begin{align*}
{U_{j - 1}}\left({{x_0}}\right){U_{j + 1}}\left({{x_0}}\right) &={\left({{U_j}\left({{x_0}}\right)}\right)^2}- 1,\\
{U_j}\left({{x_0}}\right){U_{j + 2}}\left({{x_0}}\right) &={\left({{U_{j + 1}}\left({{x_0}}\right)}\right)^2}- 1.
\end{align*} 
Taking into account that \({U_{N - j}}\left({{x_0}}\right) ={U_j}\left({{x_0}}\right)\) we get 
\begin{gather*}
\beta_0^{(0)}- \beta_2^{(0)}={\left({{U_0}\left({{x_0}}\right){U_1}\left({{x_0}}\right)}\right)^2}\\
+ \sum_{j = 1}^{N - 2}{{{\left({{U_{j - 1}}\left({{x_0}}\right){U_j}\left({{x_0}}\right)}\right)}^2}} +{\left({{U_{N - 1}}\left({{x_0}}\right){U_N}\left({{x_0}}\right)}\right)^2}\\
-\sum_{j = 1}^{N - 2}{\left({{{\left({{U_j}\left({{x_0}}\right){U_{j + 1}}\left({{x_0}}\right)}\right)}^2}-{{\left({{U_j}\left({{x_0}}\right)}\right)}^2}-{{\left({{U_{j + 1}}\left({{x_0}}\right)}\right)}^2}+ 1}\right)} \\
= 2\sum_{j = 1}^{N - 1}{{{\left({{U_j}\left({{x_0}}\right)}\right)}^2}} - (N - 2) \\
= \frac{1}{{{{\sin}^2}\frac{\pi}{{N + 2}}}}\left({2\sum_{j = 1}^{N - 1}{{{\sin}^2}\frac{{\pi (j + 1)}}{{N + 2}}} - (N - 2){{\sin}^2}\frac{\pi}{{N + 2}}}\right) \\
= \frac{1}{{{{\sin}^2}\frac{\pi}{{N + 2}}}}\left({N + 2\cos \frac{{2\pi}}{{N + 2}}- (N - 2){{\sin}^2}\frac{\pi}{{N + 2}}}\right) = \left({N + 2}\right){\cot ^2}\frac{\pi}{{N + 2}}.
\end{gather*}
Thus,
\[
\beta_0{(0)}- \beta_2{(0)}= \left({N + 2}\right)\frac{{x_0^2}}{{1 - x_0^2}}.
\]
Similarly
\begin{multline*}
\beta_{k - 1}^{(0)}- \beta_{k + 1}^{(0)}= \sum_{j = 1}^{N - k + 1}{{U_{j - 1}}\left({{x_0}}\right){U_j}\left({{x_0}}\right){U_{j + k - 2}}\left({{x_0}}\right){U_{j + k - 1}}\left({{x_0}}\right)}\\
- \sum_{j = 1}^{N - k - 1}{{U_{j - 1}}\left({{x_0}}\right){U_j}\left({{x_0}}\right){U_{j + k}}\left({{x_0}}\right){U_{j + k + 1}}\left({{x_0}}\right)}.
\end{multline*}

Lemma 3 implies that 
\begin{align*}
{U_{j - 1}}\left({{x_0}}\right){U_{j + k}}\left({{x_0}}\right) &={U_j}\left({{x_0}}\right){U_{j + k - 1}}\left({{x_0}}\right) -{U_{k - 1}}\left({{x_0}}\right),
\\
{U_j}\left({{x_0}}\right){U_{j + k + 1}}\left({{x_0}}\right) &={U_{j + 1}}\left({{x_0}}\right){U_{j + k}}\left({{x_0}}\right) -{U_{k - 1}}\left({{x_0}}\right).
\end{align*} 
Then 
\begin{gather*}
\beta_{k - 1}^{(0)}- \beta_{k + 1}^{(0)}={U_0}\left({{x_0}}\right){U_1}\left({{x_0}}\right){U_{k - 1}}\left({{x_0}}\right){U_k}\left({{x_0}}\right) \\
+ \sum_{j = 2}^{N - k - 1}{{U_{j - 1}}\left({{x_0}}\right){U_j}\left({{x_0}}\right){U_{j + k - 2}}\left({{x_0}}\right){U_{j + k - 1}}\left({{x_0}}\right)}  \\
+{U_{N - k}}\left({{x_0}}\right){U_{N - k + 1}}\left({{x_0}}\right){U_{N - 1}}\left({{x_0}}\right){U_N}\left({{x_0}}\right)  \\
\sum_{j = 1}^{N - k - 1}{{U_j}\left({{x_0}}\right){U_{j + 1}}\left({{x_0}}\right){U_{j + k - 1}}\left({{x_0}}\right){U_{j + k}}\left({{x_0}}\right)}  \\
+ \sum_{j = 1}^{N - k - 1}{{U_j}\left({{x_0}}\right){U_{j + k - 1}}\left({{x_0}}\right){U_{k - 1}}\left({{x_0}}\right)} \\
+ \sum_{j = 1}^{N - k - 1}{{U_{j + 1}}\left({{x_0}}\right){U_{j + k}}\left({{x_0}}\right){U_{k - 1}}\left({{x_0}}\right)} - (N - k - 1){\left({{U_{k - 1}}\left({{x_0}}\right)}\right)^2}\\
={U_1}\left({{x_0}}\right){U_{k - 1}}\left({{x_0}}\right){U_k}\left({{x_0}}\right) + \sum_{j = 1}^{N - k - 1}{{U_{j + 1}}\left({{x_0}}\right){U_{j + k}}\left({{x_0}}\right){U_{k - 1}}\left({{x_0}}\right)} \\
+ \sum_{j = 1}^{N - k - 1}{{U_j}\left({{x_0}}\right){U_{j + k - 1}}\left({{x_0}}\right){U_{k - 1}}\left({{x_0}}\right)}  \\
+{U_{N - k}}\left({{x_0}}\right){U_{N - k + 1}}\left({{x_0}}\right){U_N}\left({{x_0}}\right) - (N - k - 1){\left({{U_{k - 1}}\left({{x_0}}\right)}\right)^2} \\
= 2{U_{k - 1}}\left({{x_0}}\right)\sum_{j = 1}^{N - k}{{U_j}\left({{x_0}}\right){U_{j + k - 1}}\left({{x_0}}\right)} - (N - k - 1){\left({{U_{k - 1}}\left({{x_0}}\right)}\right)^2}\\
= \frac{{{U_{k - 1}}\left({{x_0}}\right)}}{{{{\sin}^2}\frac{\pi}{{N + 2}}}}\left({2\sum_{j = 1}^{N - k}{\sin \frac{{(j + 1)\pi}}{{N + 2}}\sin \frac{{(j + k)\pi}}{{N + 2}}}}\right. \\
\left.{- \left({N - k - 1}\right)\sin \frac{{k\pi}}{{N + 2}}\sin \frac{\pi}{{N + 2}}}\right).
\end{gather*}

Using the identity from   Lemma 6 we get
\begin{multline*}
\beta_{k - 1}^{(0)}- \beta_{k + 1}^{(0)}=\\
 \frac{{{U_{k - 1}}\left({{x_0}}\right)}}{{{{\sin}^2}{x_0}}}\frac{{\cos \frac{\pi}{{N + 2}}}}{{2\sin \frac{\pi}{{N + 2}}}}\left({\left({N - k + 3}\right)\sin \frac{{(k + 1)\pi}}{{N + 2}}
- \left({N - k + 1}\right)\sin \frac{{(k - 1)\pi}}{{N + 2}}}\right).
\end{multline*}
Further, using last formula from Lemma 3, and bearing in mind that 
\({U_{N - j}}\left({{x_0}}\right) ={U_j}\left({{x_0}}\right)\), we can write
\[
\beta_{k - 1}^{(0)}- \beta_{k + 1}^{(0)}= \cos \frac{\pi}{{N + 2}}{U_{k - 1}}\left({{x_0}}\right){U'_{N - k + 1}}\left({{x_0}}\right), 
\]
which yields
\begin{align*}
a_j^{(0)} &= \frac{{\beta_{j - 1}^{(0)}- \beta_{j + 1}^{(0)}}}{{\beta_0^{(0)}- \beta_2^{(0)}}}= \frac{1}{{N + 2}}\frac{{1 - x_0^2}}{{x_0^2}}\left({\cos \frac{\pi}{{N + 2}}{U_{k - 1}}\left({{x_0}}\right){{U'}_{N - k + 1}}\left({{x_0}}\right)}\right)  \\
 &= \frac{1}{{{{U'}_N}\left({\cos \frac{\pi}{{N + 2}}}\right)}}{{U'}_{N - j + 1}}\left({\cos \frac{\pi}{{N + 2}}}\right){U_{j - 1}}\left({\cos \frac{\pi}{{N + 2}}}\right).
\end{align*}
Here we have used \({U'_N}\left({\cos \frac{\pi}{{N + 2}}}\right) = \left({N + 2}\right)\frac{{\cos \frac{\pi}{{N + 2}}}}{{{{\sin}^2}\frac{\pi}{{N + 2}}}}\).

Thus, the optimal polynomial $F_N^{(0)}(z)$ has been constructed. Let us show that
the function \({\rho_N}(a_1, \ldots, a_N)\) 
attains its supremum  equal to 
\[
{\hat\rho_N^{(0)}=\max_{a_1=1}\left(\min_{t\in T\cup\pi}\Re(F_N^{(0)}(e^{it}))\right)}.
\]
To do that, consider the one-parameter family of conjugate trigonometric polynomials
\[
C^{(\varepsilon)}(t)=\frac{C^{(0)}(t)}{1+\varepsilon}+ \frac\varepsilon{1+\varepsilon}\cos t,\quad S^{(\varepsilon)}(t)=\frac{S^{(0)}(t)}{1+\varepsilon}+ \frac\varepsilon{1+\varepsilon}\sin t,
\]
which obviously satisfy $a_1=1$.

For all $t\in(0,\pi)$ and $\varepsilon>0$ it is obvious that $S^{(\varepsilon)}(t)>0.$ Since 
\[
C^{(\varepsilon)}(\pi)=\frac{\hat\rho_N^{(0)}}{1+\varepsilon}- \frac\varepsilon{1+\varepsilon},
\]
we have $C^{(\varepsilon)}(\pi)<\hat\rho_N^{(0)}$ and $C^{(\varepsilon)}(\pi)\to\hat\rho_N^{(0)}$ as $\varepsilon\to 0.$ Therefore
\begin{multline*}
\sup_{{a_1}= 1}\left({\min_t \left\{{\Re\left({F_N\left({{e^{it}}}\right)}\right):\Im \left({F_N\left({{e^{it}}}\right)}\right) = 0}\right\}}\right) \\
=\max_{{a_1}= 1}\left({\min_{t \in T \cup \left\{\pi \right\}}\left\{{\Re\left({F_N^{(0)}\left({{e^{it}}}\right)}\right)}\right\}}\right).
\end{multline*}
The theorem is proved.
\end{proof}

\begin{corollary}
\[
\inf_{a_2,...,a_N}\left({\max_t \left\{{\Re\left({F\left({{e^{it}}}\right)}\right):\Im \left({F\left({{e^{it}}}\right)}\right) = 0}\right\}}\right) = \frac{1}{4}{\sec ^2}\frac{\pi}{{N + 2}}.
\] 
The coefficients of the extremal polynomial are
\[
a_j^{(0)}= \frac{{{{( - 1)}^{j + 1}}}}{{{{U'}_N}\left({\cos \frac{\pi}{{N + 2}}}\right)}}{U'_{N - j + 1}}\left({\cos \frac{\pi}{{N + 2}}}\right){U_{j - 1}}\left({\cos \frac{\pi}{{N + 2}}}\right).
\]
\end{corollary}

\section{The optimization problem and coverings}

The first and most important result about covering  line segments and circles by a conformal mapping of the  unit disc  \(\mathbb D =\{{z:\left| z \right| < 1}\}\) is the well-known theorem of Koebe, whose complete proof was given by Bieberbach in 1916. According to that theorem the image of $\DD$ under a univalent mapping with standard normalization contains a central disc of radius 1/4. The sharpness of this constant is witnessed by the function $e^{-i\theta}K(ze^{i\theta}),$ where
\begin{equation}\label{13}
K(z)=\frac z{(1-z)^2}
\end{equation}
 is called the Koebe function.

Several generalizations of covering theorems were given in \cite{18.,19.,20.}. In \cite{9.} the Koebe theorem was generalized to nonunivalent functions with real coefficients by showing that the minimal simply connected set that contains the image of $\DD$ contains a central disc of radius 1/4. The proof follows from the Riemann conformal mapping theorem and the Lindel\"of principle: the minimal simply connected set that contains the image of $\DD$ can be conformally mapped onto $\DD$ by a function whose derivative at the origin is greater than or equal to~1  \cite{21.}.

Some such covering  theorems can be proved for polynomials \cite{22.}. For univalent polynomials \(F(z)=z+\sum_{j=2}^Na_jz^j\)  with real coefficients 
the estimate of the Koebe radius $R$ can be obtained via the Rogosinski-Szeg\"o estimate  $|a_2|\le2\cos2\psi_N,$ where $\psi_N=\pi/(N+3)$ if $N$ is odd, and $\psi_N$ is the smallest positive root of the equation
$$
(N+4)\sin(N+2)\psi_N+(N+2)\sin(N+4)\psi_N=0
$$ 
if $N$ is even \cite{23.}. One can check that in the even case, 
$\pi/(N+3)<\psi_N<\pi/(N+2).$ Hence the 
Koebe radius has the estimate
\begin{equation}\label{14}
	R\ge \frac1{2+\max\{|a_2|\}}\ge\frac1{4\cos^2\frac\pi{N+3}}.
\end{equation}
This estimate from below  can be completed by an estimate from above by using the results of Section 3.

\begin{theorem}The minimum simply connected set that contains the image of $\DD$ under the polynomial mapping \(F\left( z \right) = z + \sum_{j = 2}^N{{a_j}{z^j}}\) contains the interval
\begin{equation}\label{15}
\left({- \frac{1}{4}{{\sec}^2}\frac{\pi}{{N + 2}},\; \frac{1}{4}{{\sec}^2}\frac{\pi}{{N + 2}}}\right). 
\end{equation}
If the polynomial \(F\left( z \right) = z + \sum_{j = 2}^N{{a_j}{z^j}}\) is typically real, then the interval \eqref{15} is covered by \(F\left(\mathbb D \right)\).
\end{theorem}

The interval 
\eqref{15} is best possible, as shown by  
\[
F\left( z \right) = z + \sum_{j = 2}^N{{{( - 1)}^{j - 1}}{a_j}{z^j}}, 
\]
where
\[
a_j^{(0)}= \frac{1}{{{{U'}_N}\left({\cos \frac{\pi}{{N + 2}}}\right)}}{U'_{N - j + 1}}\left({\cos \frac{\pi}{{N + 2}}}\right){U_{j - 1}}\left({\cos \frac{\pi}{{N + 2}}}\right).
\]

\begin{corollary} 
The Koebe radius $R$ for typically real polynomials satisfies
\begin{equation}\label{16}
R \le \frac{1}{4\cos^2\frac\pi{N + 2}}. 
\end{equation}
\end{corollary}

Note that we cannot apply the Riemann Theorem and the Lindel\"of principle here because there is no guarantee that the univalent function that maps the minimal simply connected set containing the image of $\DD$ onto $\DD$ is a polynomial of degree 
$\le N$. 

In \cite{11.,12.} the Suffridge polynomial was used to approximate the Koebe function \eqref{13}. Several extremal properties of  the Suffridge polynomials were established in \cite{13.,14.}. 
In particular, the following is valid.

\begin{theorem}
The minimal simply connected set that contains the image of $\DD$ under the polynomial map \(F\left( z \right) = \sum_{j = 1}^N{{a_j}{z^j}}\) with \(F(1)=1\) contains the interval
\begin{equation}\label{17}
\left(-\tan^2\frac\pi{2(N+2)},1\right).
\end{equation}
If  $F(z)$ is typically real then the interval \eqref{17}  is covered by 
$F(\DD)$.
\end{theorem}

In general, we cannot replace the interval \eqref{17} by a disc in this result. Indeed, for even $N$ the image of $\DD$ under the univalent polynomial map \(F\left( z \right) = \sum_{j = 1}^N{{a_j}{z^j}}\) with \( F(1)=1\) does contain the central disc of radius \(\tan^2\frac\pi{2(N+2)}.\) For odd $N$, on the contrary, 
the radius equals $|F(e^{i\tau_N})|,$ where $\tau_N$ is the root of the equation $F^\prime(e^{it})=0$ that is closest to $\pi.$

\section{Connection with nonnegative trigonometric polynomials}
Nonnegative trigonometric polynomials appear in many problems of real analysis, theory of univalent maps, approximation theory, theory of orthogonal polynomials on the unit cricle, number theory and other branches of mathematics. The most natural and nicest examples of application of nonnegative trigonometric polynomials occur in solving extremal problems.

Nonnegative trigonometric polynomials $1+\sum_{j=1}^Na_j\cos jt$ satisfy the Fej\'er inequality \cite{15.,24.}
$$
|a_1|\le2\cos\frac\pi{N+2}.
$$
The inequality is sharp, with the unique extremal polynomial 
$$
\frac2{N+2}\sin^2\frac\pi{N+2}\Phi_N^{(1)}(t),
$$
where
$$
\Phi_N^{(1)}(t)=\left(\frac{\cos\frac{N+2}2t}{\cos t-\cos \frac\pi{N+2}}\right)^2.
$$
The above polynomial has coefficients \cite{25.}
\begin{multline*}
a_j^{(0)}=\frac1{N+2}\csc\frac\pi{N+2}\\
\cdot\bigg(
(N-j+3)\sin\frac{\pi(j+1)}{N+2}-(N-j+1)\sin\frac{\pi(j-1)}{N+2}
\bigg),
\end{multline*}
or by Lemma 3,
$$
a_j^{(0)}=\frac{2\sin^2\frac\pi{N+2}}{N+2}U^\prime_{N-j+1}\left(\cos\frac\pi{N+2}\right).
$$
In 1900  Fej\'er \cite{26.} established that the trigonometric polynomial  
$$
1+2\sum_{j=1}^N\left(1-\frac j{N+1}\right)\cos jt
$$
is nonnegative by proving for it the representation
$$
1+2\sum_{j=1}^N\left(1-\frac j{N+1}\right)\cos jt=\frac1{N+1}\Phi_N^{(2)}(t),
$$
where
$$
\Phi_N^{(2)}(t)=\left(\frac{\sin\frac{N+1}2t}{\sin \frac t2}\right)^2.
$$
Further, Fej\'er \cite{15.,27.} established an extremal property of  $\Phi_N^{(2)}(t)$: the maximum of a nonnegative trigonometric polynomial $1+\sum_{j=1}^N a_j\cos jt$ does not exceed $N+1$, with equality attained solely for $\frac1{N+1}\Phi_N^{(2)}(t)$ and only at the points $2\pi k$, $k=0,\pm1,\pm2,\ldots.$

The functions $\Phi_N^{(1)}(t)$ and $\Phi_N^{(2)}(t)$ are called Fej\'er kernels and have various extremal properties \cite{13.,14.}.

Let us consider the following problem: for a trigonometric polynomial
$C(t)=\sum_{j=1}^N a_j\cos jt$ with real coefficients and $a_1=1$ find
$$
\max_{a_j}\min_t C(t) .
$$

\begin{theorem}

\begin{equation}\label{18}
\max_{a_j}\min_t C(t) =-\frac12\sec\frac\pi{N+2}.
\end{equation}
The solution is unique and is given by 
$$
a_j^{(0)}=\frac{U_{N-j+1}^\prime\left( \cos\frac\pi{N+2}\right)}{U_{N}^\prime\left( \cos\frac\pi{N+2}\right)},\quad j=1,...,N.
$$
\end{theorem}

\begin{proof}
Suppose the trigonometric polynomial $\gamma+\cos t+\sum_{j=2}^Na_j\cos jt$ is nonnegative. Then 
$$
1+\frac{\cos t}\gamma+\sum_{j=2}^N\frac{a_j}\gamma\cos jt\ge0.
$$ 
Thus, from the Fej\'er inequality we get $\frac1\gamma\le2\cos\frac\pi{N+2}$, or
$\gamma\ge\frac12\sec\frac\pi{N+2}$, with equality attained for
$$
\frac{a_j^{(0)}}\gamma=\frac{2\sin^2\frac\pi{N+2}}{N+2}U^\prime_{N-j+1}\left(\cos\frac\pi{N+2}\right).
$$
Hence
\[
a_j^{(0)}=\frac12\sec\frac\pi{N+2}\frac{2\sin^2\frac\pi{N+2}}{N+2}U^\prime_{N-j+1}\left(\cos\frac\pi{N+2}\right)=\frac{U_{N-j+1}^\prime\left( \cos\frac\pi{N+2}\right)}{U_{N}^\prime\left( \cos\frac\pi{N+2}\right)}.\qedhere
\]
\end{proof}

Note that the Fej\'er trigonometric polynomial (with appropriate normalization) 
$$
\frac{N+1}{2N}\left(-1+\frac1{N+1}\Phi_N^{(2)}(t)\right)=\sum_{j=1}^N\left(1-\frac{j-1}N\right)\cos jt
$$
gives the estimate $\frac{N+1}{2N}$, which is worse than \eqref{18}.

\begin{figure}
\includegraphics[scale=0.25]{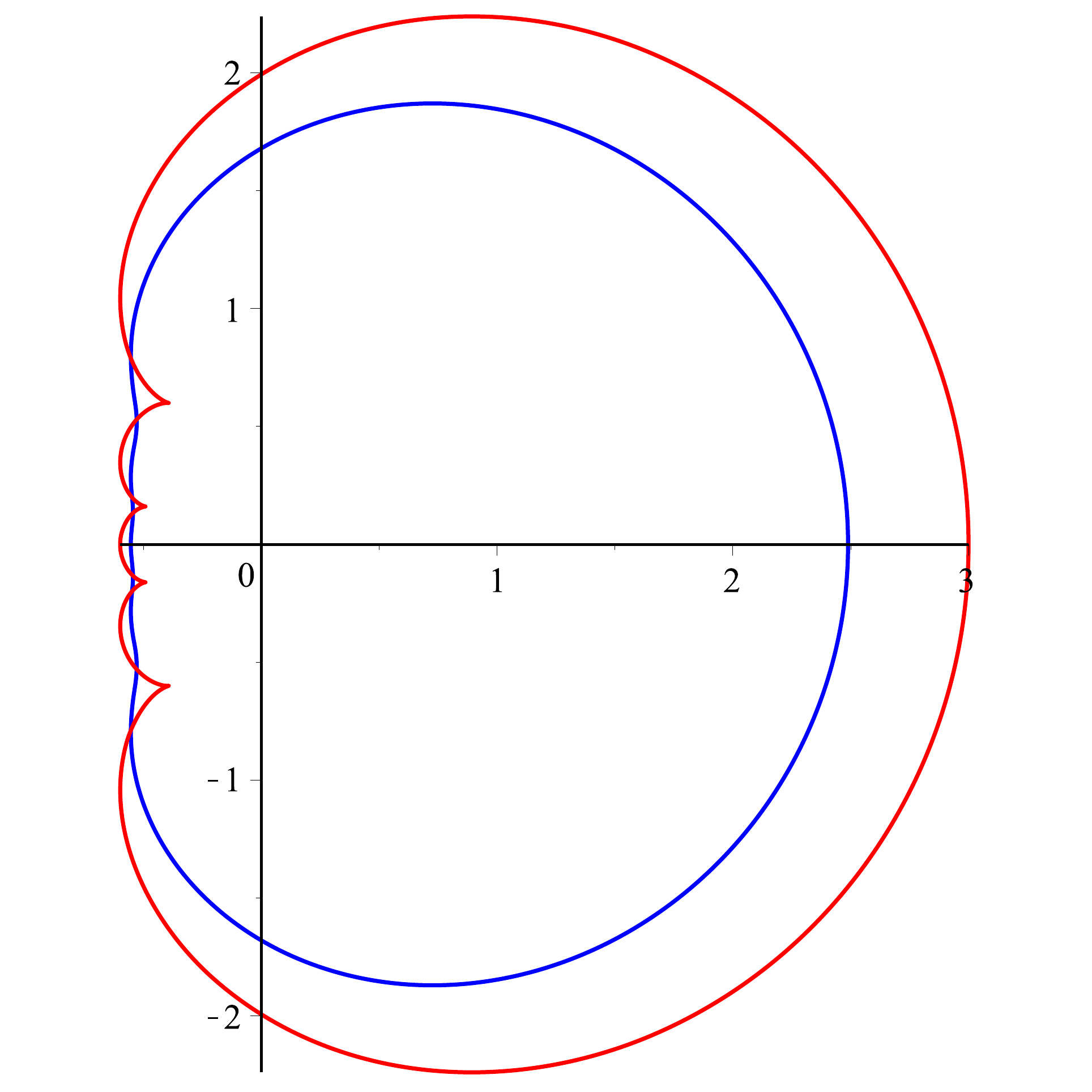}\hspace{.4cm}
\includegraphics[scale=0.25]{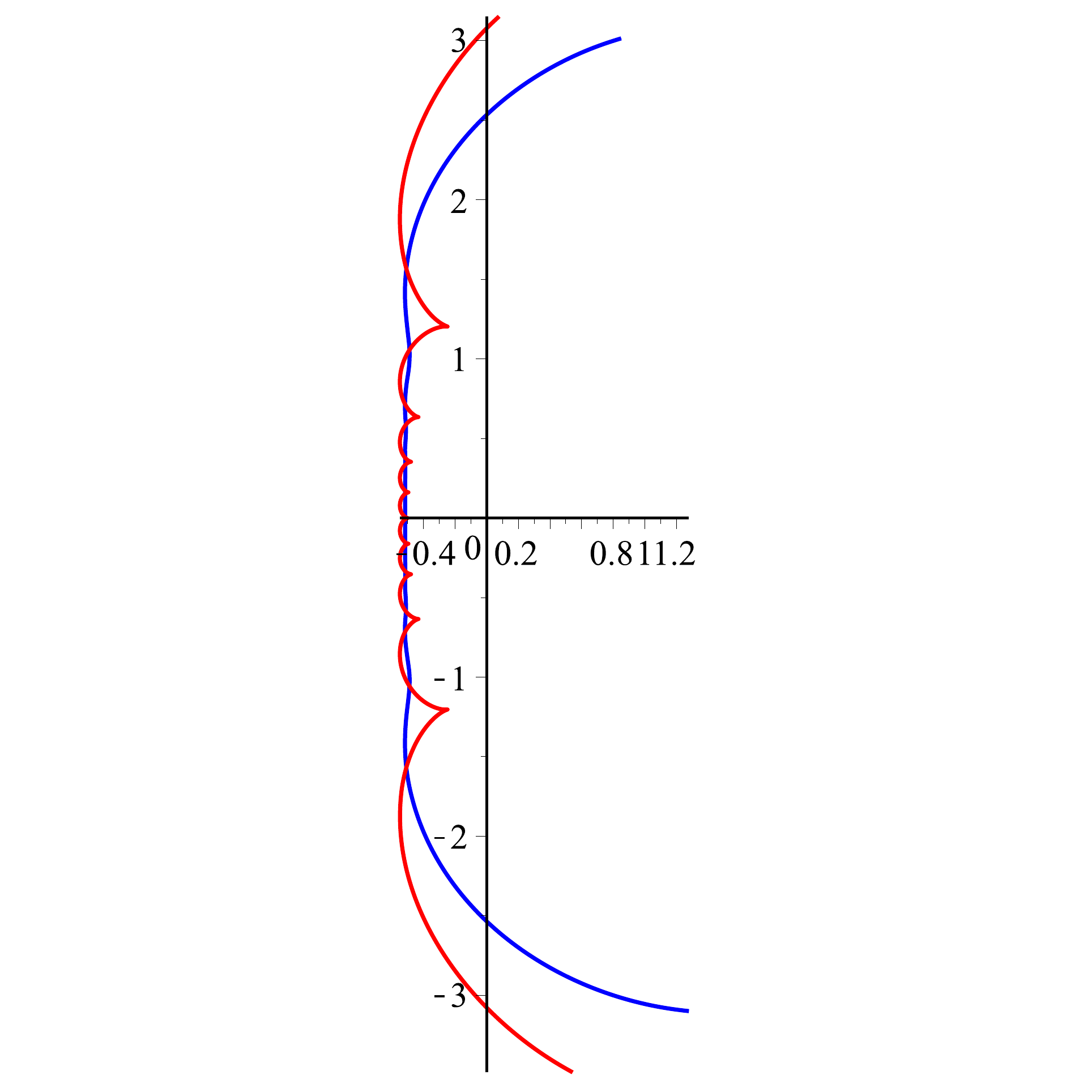}
\caption{The image of the unit disc under the Fej\'er polynomial $\hat\Phi(z)=\sum_{j=1}^N\left(1-\frac{j-1}N\right)z^j$ (red) and the polynomial $\frac1{U^\prime_N\left(\cos\frac\pi{N+2}\right)}\sum_{j=1}^NU^\prime_{N-j+1}(\cos\frac\pi{N+2})z^j$ (blue) for $N=5$ (left) and $N=10$ (right).}
\end{figure}

\begin{figure}
\includegraphics[scale=0.25]{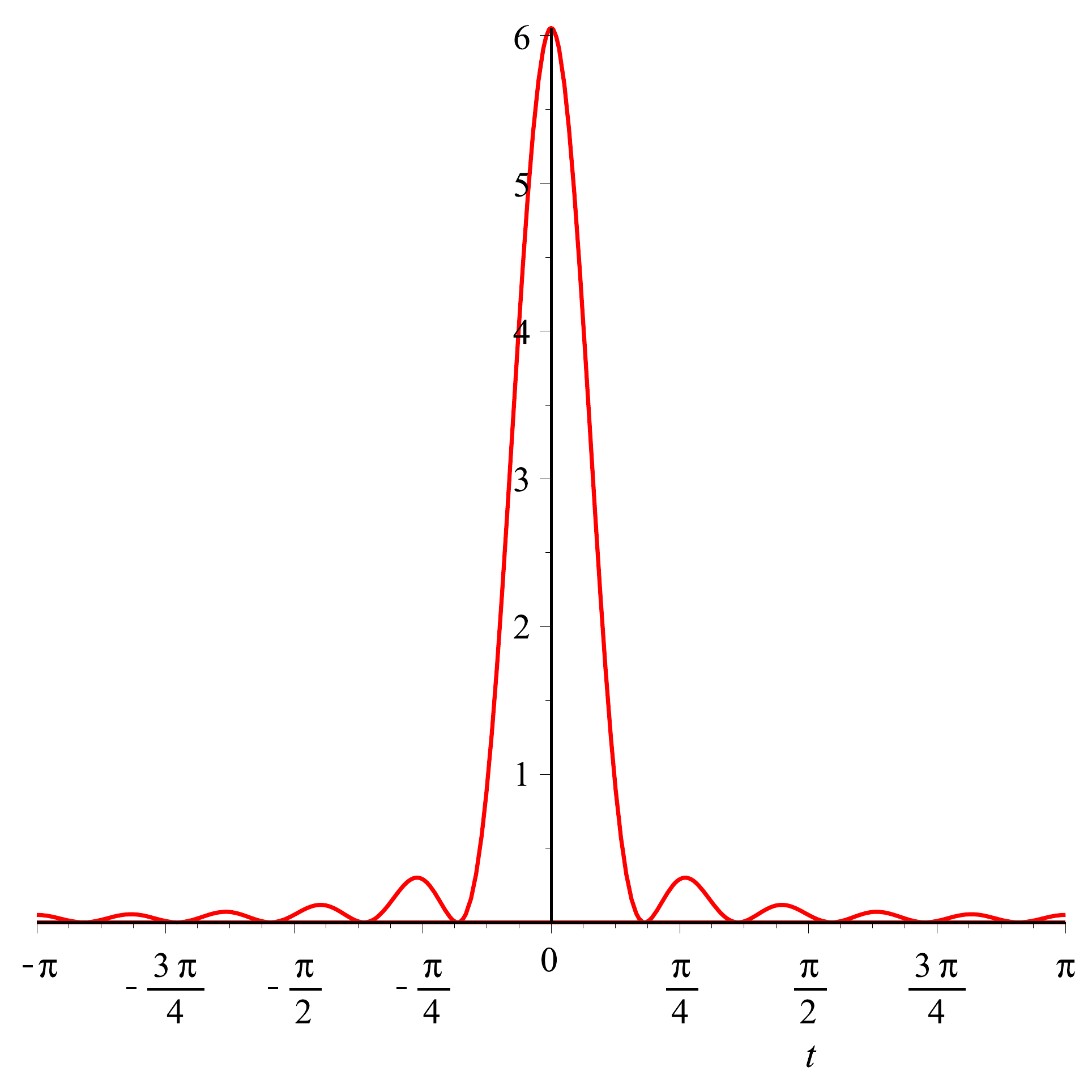}\hspace{.4cm}
\includegraphics[scale=0.25]{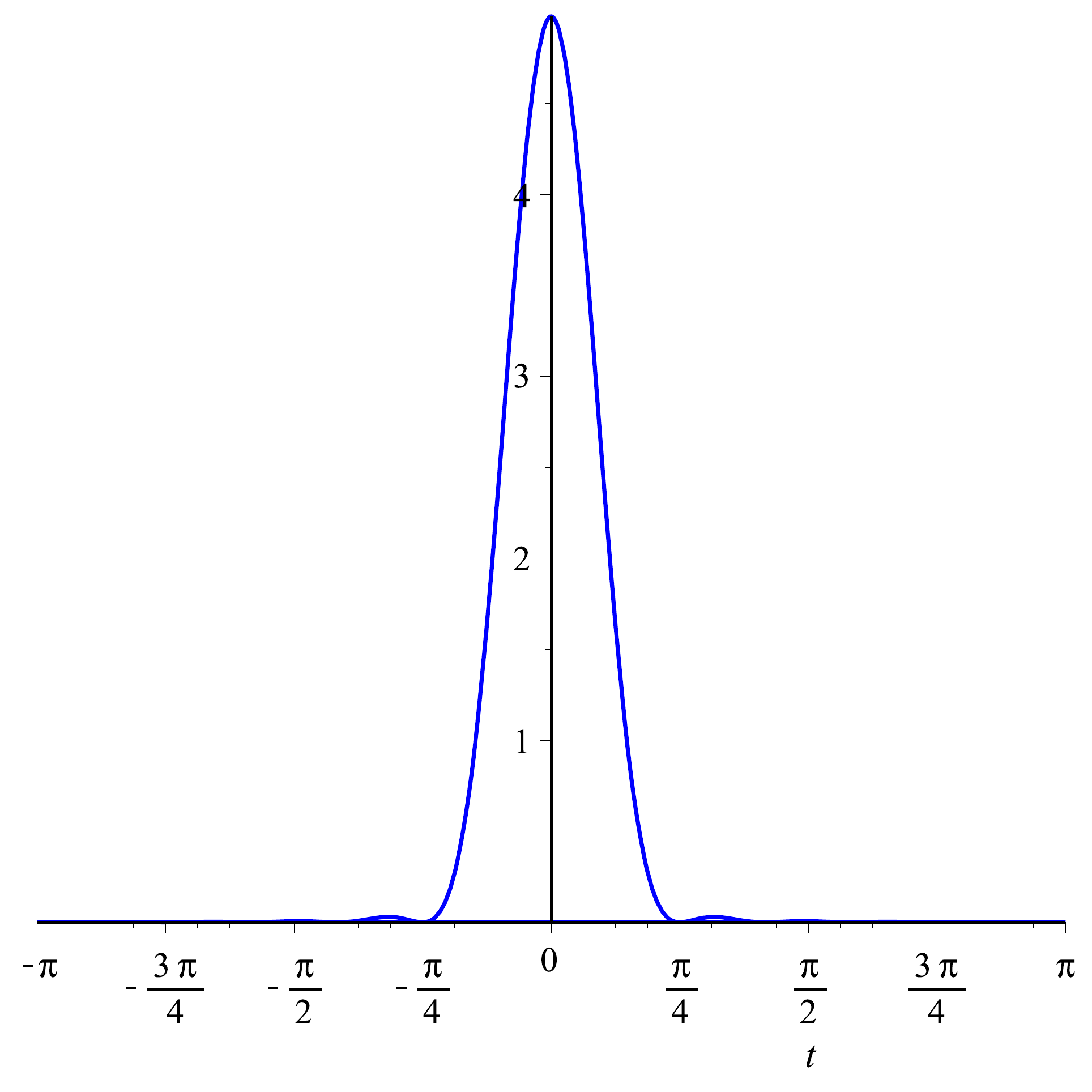}
\caption{The Fej\'er kernel $\sum_{j=1}^N\left(1-\frac{j-1}N\right)\cos jt$ (red) and the polynomial $\frac1{U^\prime_N\left(\cos\frac\pi{N+2}\right)}\sum_{j=1}^NU^\prime_{N-j+1}\left(\cos\frac\pi{N+2}\right)\cos jt$ (blue) for $N=10.$}
\end{figure}

Let us note that a different normalization leads to another extremal polynomial:
$$
\max_{\sum_{j=1}^Na_j=1}\min_t C(t) =-\frac{1}{N}, \quad a_j^{(0)} = \frac{2(N+1-j)}{N(N+1)}, \quad j = 1, \ldots, N,
$$
which is the (suitably normalized) classical Fej\'er polynomial 
$$
\frac1N\left(-1+\frac1{N+1}\Phi_N^{(2)}(t)\right).
$$

\section{Final Conjectures}

If \(N = 3, 4\) then the polynomials
\[
F\left( z \right) = \frac{1}{{{{U'}_N}\left({\cos \frac{\pi}{{N + 2}}}\right)}}\sum_{j = 1}^N{{{U'}_{N - j + 1}}\left({\cos \frac{\pi}{{N + 2}}}\right){U_{j - 1}}\left({\cos \frac{\pi}{{N + 2}}}\right){z^j}}
\]
take the form
$$
F_3(z)=z+\frac2{\sqrt5}z^2+\frac12\left(1- \frac1{\sqrt5}\right)z^3,\quad F_4(z)=z+\frac76z^2+\frac23z^3+\frac16z^4.
$$
The polynomial $F_3(z)$ is univalent in $\DD$ because the point $\left(\frac2{\sqrt5},\frac12\left(1- \frac1{\sqrt5}\right)\right)$ is in the  univalence region  of  cubic polynomials in the space of coefficients $(a_2,a_3)$ \cite{28.}.
For $N=3$ the Koebe radius is 
$$
R=\frac14\sec^2\frac\pi5.
$$
A nice proof of the univalence of  $F_4(z)$ is found in \cite{29.}. Numerical simulations suggest univalence for all \(N\). This justifies 

\begin{conjecture}\label{con1}  The polynomial
\begin{equation}\label{19}
F\left( z \right) = \frac{1}{{{{U'}_N}\left({\cos \frac{\pi}{{N + 2}}}\right)}}\sum_{j = 1}^N{{{U'}_{N - j + 1}}\left({\cos \frac{\pi}{{N + 2}}}\right){U_{j - 1}}\left({\cos \frac{\pi}{{N + 2}}}\right){z^j}}
\end{equation}
is univalent in  \(\mathbb D\).
\end{conjecture}

\begin{figure}
\includegraphics[scale=0.25]{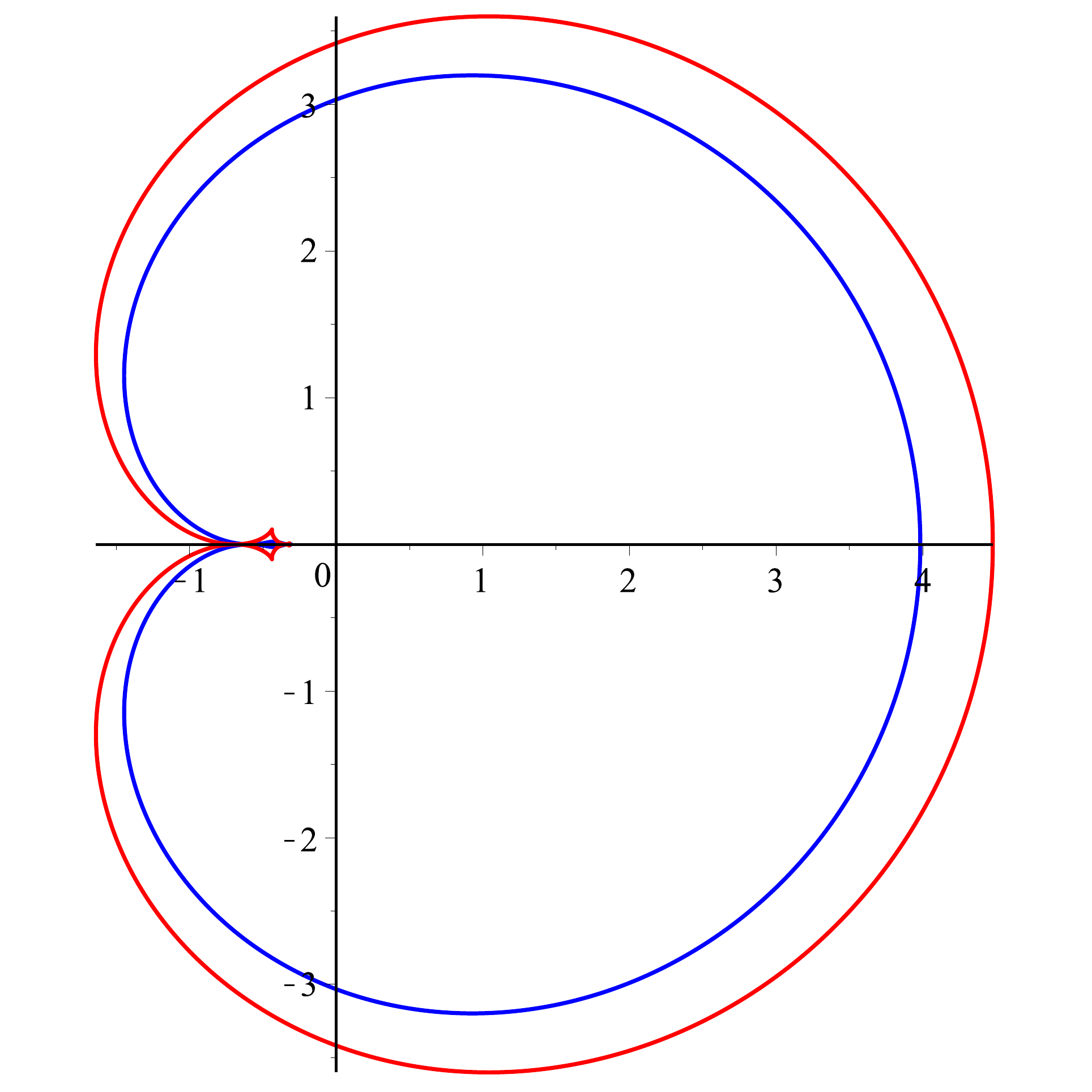}\hspace{.4cm}
\includegraphics[scale=0.25]{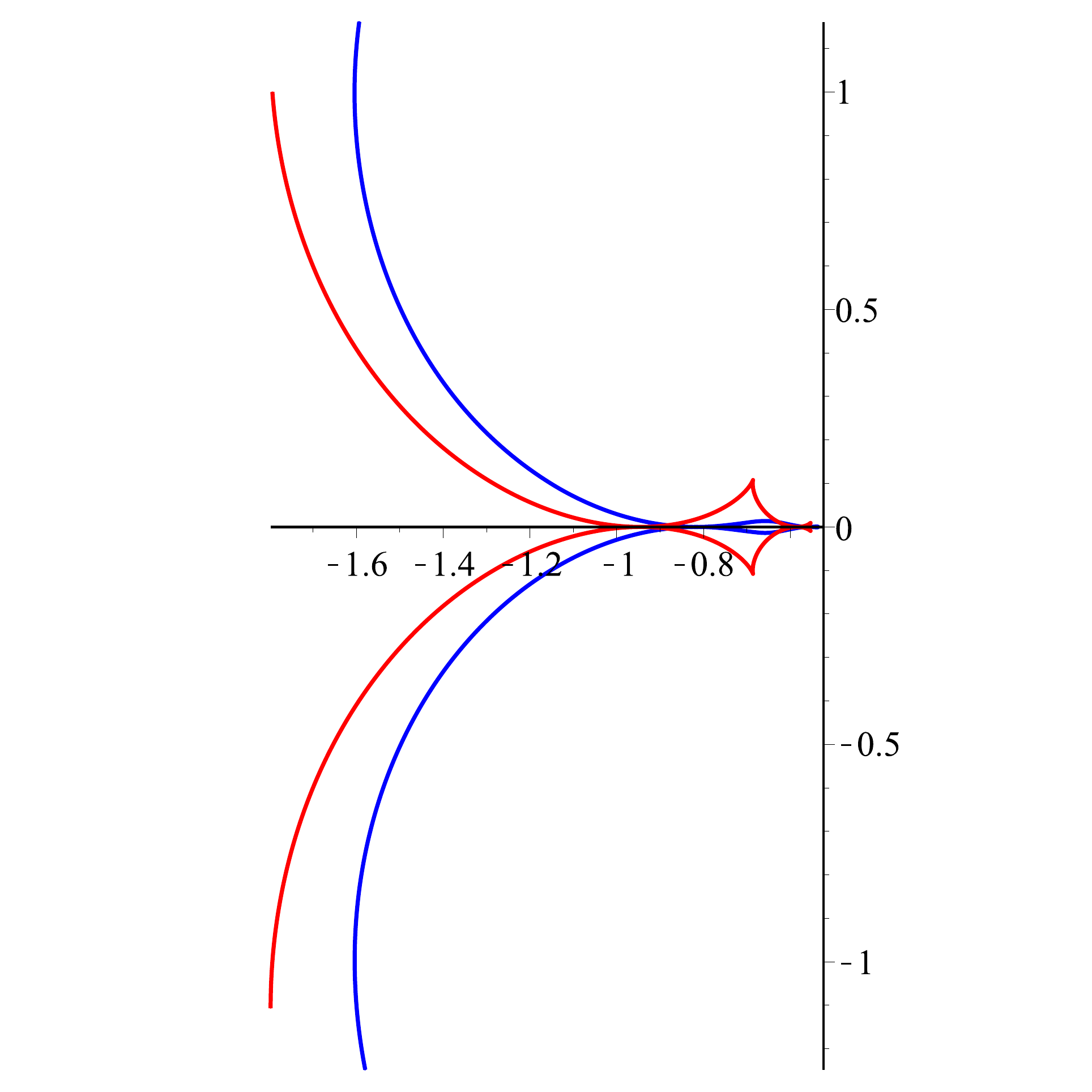}
\caption{The images if the unit disc under the Suffridge (red) polynomial and under \eqref{19} (blue) for $N=5$.}
\end{figure}

\begin{conjecture}\label{con2} 
The  Koebe radius of every univalent polynomial of degree \(N\) is 
\[
R = \frac{1}{4\cos^2(\pi/(N+2))}.
\]
\end{conjecture}

\begin{conjecture}\label{con3} 
 Conjecture \ref{con2} remains valid for polynomials with complex coefficients. All optimal polynomials are given by  \({e^{-i\alpha}}F\left({z{e^{i\alpha}}}\right)\), where  \(F\left( z \right)\) is the polynomial \eqref{19}.
\end{conjecture}

In \cite{11.}, together with  \eqref{2} the author considers the polynomials
$$
S\left (q,z \right) = a_{1}^0\sum_{j = 1}^N{\left ({1 - \frac{{j - 1}}{N}}\right){U_{jq - 1}}\left ({\cos \frac{\pi}{{N + 1}}}\right){z^j}}
$$
where $q=1,\ldots,N.$ It is shown that each such  polynomial is univalent.

\begin{conjecture}\label{con4}  The polynomial
\begin{align*}
F(q,z) &=\frac{1}
{U_{q-1}\left (\cos \frac\pi{N + 2}\right) U^\prime_N \left({\cos \frac{\pi}{{N + 2}}}\right)}\\
&\quad\times\sum_{j=1}^N {U'_{N - j + 1}} \left ({\cos \frac{\pi}{{N + 2}}}\right){U_{jq - 1}}\left ({\cos \frac{\pi}{{N + 2}}}\right)z^j
\end{align*}
is univalent for every $q = 1, \ldots, N$ and $ N=1,2,3,\ldots.$.
\end{conjecture}

\begin{conjecture}\label{con5}  For a couple of conjugate polynomials
$$
C(t)=\sum_{j=1}^Na_j\cos\,(2j-1)t,\quad
S(t)=\sum_{j=1}^Na_j\sin\,(2j-1)t
$$
with real coefficients and the normalization condition $a_1=1$, we have
\begin{equation}\label{20}
\inf_{a_j}\max_t\,\{|C(t)|: S(t)=0\}=-\frac12\sec^2\frac\pi{2N+2}.
\end{equation}
The solution is unique and is given by
$$
a_j^{(0)}=\frac{U_{2(N-j+1)}^\prime\left( \cos\frac\pi{2N+2}\right)}{U_{2N}^\prime\left( \cos\frac\pi{2N+2}\right)},\quad j=1,\ldots,N.
$$
\end{conjecture}

Note that the Suffridge polynomial 
\[
S(N,z) = a_1^{(0)} \sum_{j = 1}^{2N-1} \left(1 - \frac{j-1}{2N-1}\right) U_{jN-1} \left(\cos\frac{\pi}{2N}\right) z^j
\]
produces the coefficients $a_j^{(0)}=\frac{2(N-j)+1}{2N-1}$, $j=1,\ldots,N$, and the estimate $\frac N{2N-1}$, which is worse than \eqref{20}.\\

It is interesting to mention that the Koebe radius for odd univalent functions (equal to 1/2) is twice that for all univalent functions. Apparently, the same result holds for polynomials.

\begin{figure}
\includegraphics[scale=0.25]{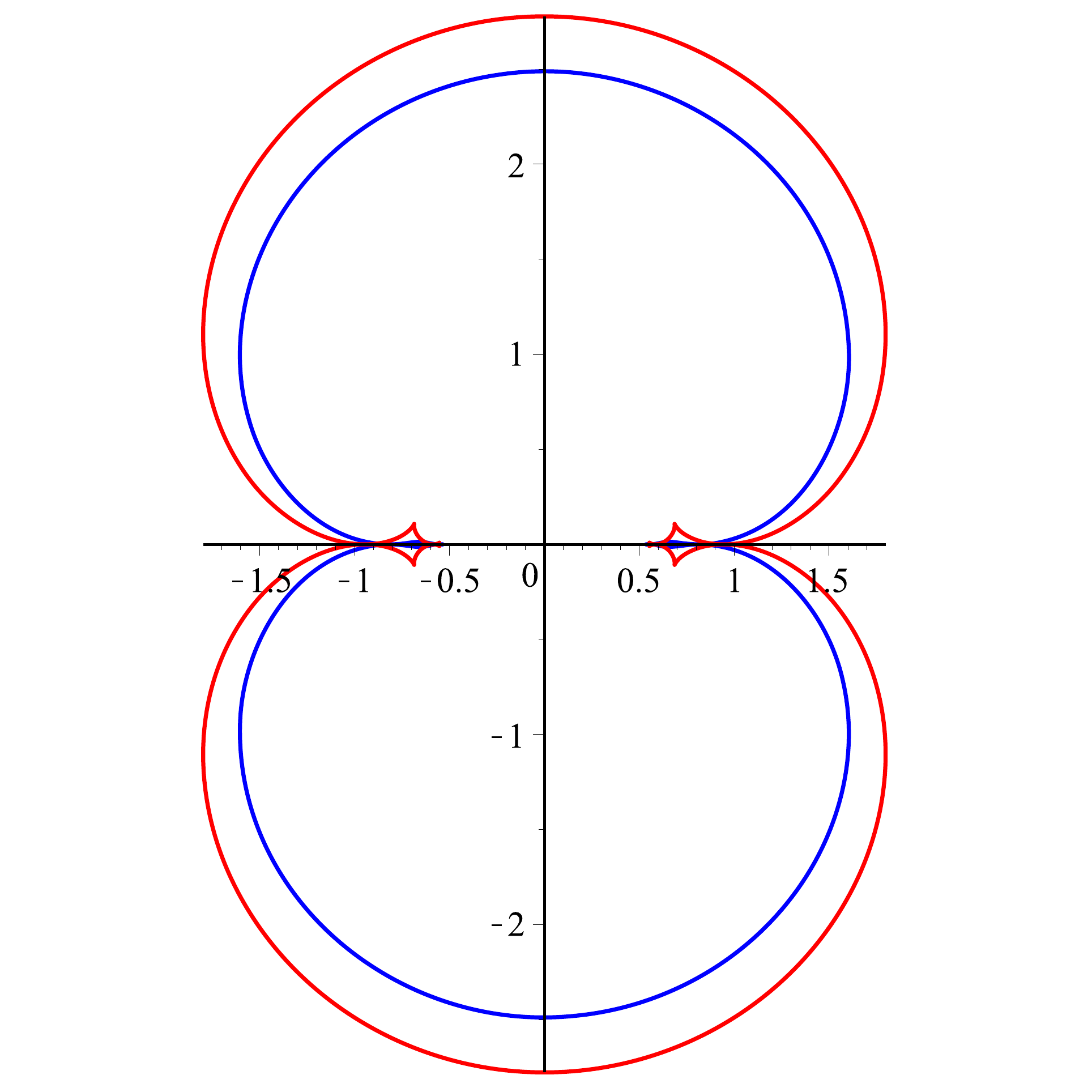}\hspace{.4cm}
\includegraphics[scale=0.25]{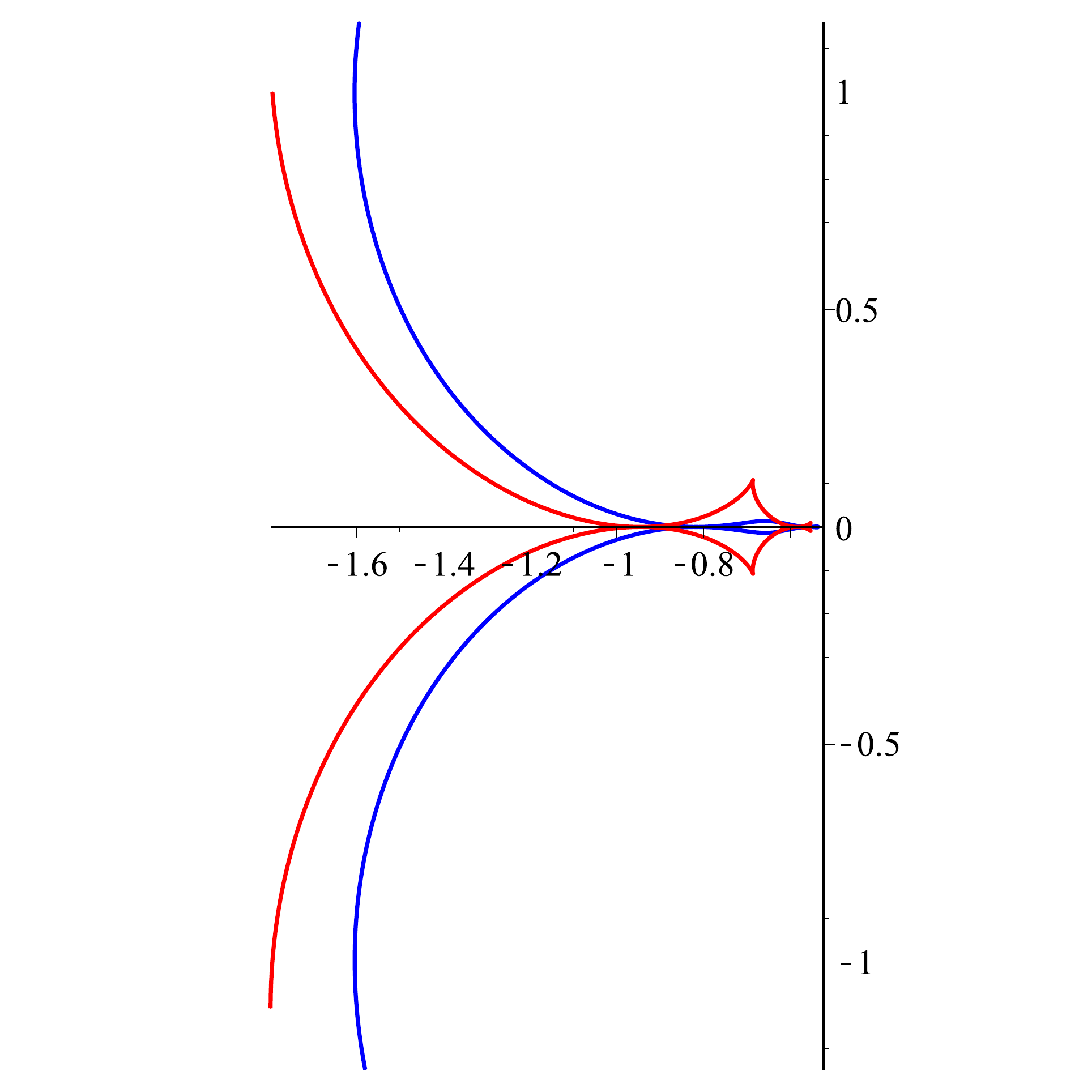}
\caption{
The image of $\DD$ under the Suffridge polynomial  $S(N,z)$ of  degree $2N-1$ (red) and the polynomial $F(N+1,z)$ of degree $2N$ (blue) for $N=5$.}
\end{figure}

\begin{conjecture}[Bounds for coefficients of univalent polynomials]\label{con6}  
 
For every positive integer \(k\), there exists \(N\) such that if \(F\left( z \right) = z + \sum_{j = 2}^M{{a_j}{z^j}}\) is univalent and $M\ge N$ then
\begin{equation}\label{con6}
\left|{{a_j}}\right| \le \frac{1}{{{U'_N}\left({\cos \frac{\pi}{{N + 2}}}\right)}}{U'_{N - j + 1}}\left({\cos \frac{\pi}{{N + 2}}}\right){U_{j - 1}}\left({\cos \frac{\pi}{{N + 2}}}\right)
\end{equation}
for all $j = 1, \ldots, k$.
\end{conjecture}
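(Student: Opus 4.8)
Since the statement is conjectural, what follows is a program of attack rather than a proof. The idea is to turn the coefficient bound into a \emph{sharp} extremal problem over the class of univalent polynomials, to identify the extremal polynomials, and to match them asymptotically against \eqref{19}. \emph{Step 1 (set-up and compactness).} Fix $k$. For each degree $M\ge k$ let
\[
c_j(M)=\sup\Bigl\{\,|a_j|:\ F(z)=z+\textstyle\sum_{i=2}^{M}a_iz^i\ \text{is univalent in }\DD\,\Bigr\},\qquad j=1,\dots,k.
\]
The normalised univalent polynomials of degree $\le M$ form a compact family (a normal family whose limits remain univalent by Hurwitz's theorem), so each $c_j(M)$ is attained by some $F_j^{(M)}$, and $c_j(M)\le j$ by de~Branges' theorem. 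Applying the rotation $e^{-i\alpha}F(e^{i\alpha}z)$ we may take the extremal $a_j$ real and positive, and, by an argument in the spirit of Lemma~7, one reduces to $F_j^{(M)}\in\mathrm{T}_M$, confining the whole problem to typically real polynomials.

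\emph{Step 2 (identifying the extremal polynomial).} This is the crux. One invokes Suffridge's description of the extreme points of the closed convex hull of the univalent polynomials of degree $\le M$: the linear functional $a_j$ is maximised at an extreme point, and the extreme points are Suffridge-type polynomials, whose derivatives are --- up to normalisation --- products $\prod_{l}(1-e^{i\varphi_l}z)$ with the angles $\varphi_l$ in arithmetic progression, i.e.\ the polynomials built from the Fej\'er kernels $\Phi_M^{(1)},\Phi_M^{(2)}$ of Section~6. Thus $c_j(M)$ is the maximum of $a_j$ over a finite, explicitly parametrised list of candidates; the Chebyshev identities of Lemma~3 give the $j$-th coefficient of each candidate in closed form, and the remaining question is which candidate wins.

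\emph{Step 3 (asymptotic matching).} One then shows that for $M$ large relative to $k$ the winner, simultaneously for all $j=1,\dots,k$, is the polynomial \eqref{19} of degree $M$, whose $j$-th coefficient is exactly the right-hand side of the inequality in the conjecture (with $M$ in the role of $N$). The base case $j=2$ is anchored by the Rogosinski--Szeg\"o estimate for $|a_2|$ recorded in \eqref{14}; for general $j$ one expands the closed forms as $M\to\infty$ --- using, for instance, $U'_N(\cos\frac{\pi}{N+2})=(N+2)\cos\frac{\pi}{N+2}/\sin^2\frac{\pi}{N+2}$ from the proof of Theorem~1, which already shows that the $j$-th coefficient of \eqref{19} tends to $j$ --- and checks that the ordering of candidates stabilises; the threshold $N=N(k)$ past which it is settled is the $N$ claimed in the conjecture.

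The step I expect to be the genuine obstacle is Step~2: de~Branges supplies only $|a_j|\le j$, whereas the sharp \emph{polynomial} coefficient bound --- which univalent polynomial of a prescribed degree maximises a given $a_j$ --- is not known in general, which is precisely why the statement remains a conjecture. A complete argument would need either a Schiffer-type variational analysis forcing $\bigl(F_j^{(M)}\bigr)'$ to be a perfect square times a Fej\'er kernel, or a new positivity identity tying the functional $a_j$ to the Koebe-radius functional $\Re F(-1)$ already solved in Theorem~1. Neither ingredient follows from the lemmas proved above, so bridging that gap is where the real work lies.
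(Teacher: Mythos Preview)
There is nothing to compare against: in the paper this statement is left as an open conjecture, with no proof or even a sketch. The only supporting remark the paper makes is that the right-hand side of the inequality tends to $j$ as $N\to\infty$ (using $U'_N(\cos\tfrac{\pi}{N+2})=(N+2)\cos\tfrac{\pi}{N+2}/\sin^2\tfrac{\pi}{N+2}$), so the bound is asymptotically consistent with de~Branges; that is exactly the observation you fold into Step~3.

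Your program is a reasonable heuristic outline, and you are candid that Step~2 is the real obstruction. That assessment is correct and matches the state of affairs in the paper: the sharp coefficient problem for univalent polynomials of fixed degree is not settled there (or elsewhere), and none of the paper's lemmas --- the variational Lemma~7, the generalized eigenvalue computation behind Theorem~1, or the Chebyshev identities --- yield the needed extremal characterisation for the functional $F\mapsto a_j$. One small caution on Step~1: reducing to typically real polynomials via a rotation and ``an argument in the spirit of Lemma~7'' is not automatic, since Lemma~7 is tailored to the functional $\min\{\Re F(e^{it}):\Im F(e^{it})=0\}$ rather than to $|a_j|$; you would need a separate argument (e.g.\ the Robertson/Rogosinski representation for typically real functions) to justify that reduction. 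But since the paper offers no proof, your honest identification of the gap is the appropriate response.
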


Finally, let us note that a right hand of the estimate \eqref{con6} monotonically approaches $j$ as $N\to\infty,$ which is an additional argument in favor of Conjecture 1.  

\section{Acknowledgment} The authors would like to thank Konstantin Dyakonov for fruitful discussions and Jerzy Trzeciak
for the help in preparation of the manuscript.

\end{document}